\newcommand{\R}{\mathbb{R}} 
\newcommand{\N}{{\mathbb N}}
\newtheorem{theorem}{Theorem}[section]
\newtheorem{lemma}[theorem]{Lemma}
\newtheorem{remark}[theorem]{Remark}
\begin{document}
\title{The validity of Whitham's approximation  for a Klein-Gordon-Boussinesq model}
\author{Wolf-Patrick D\"ull, Kourosh Sanei Kashani, Guido Schneider \\[5mm] 
Institut f\"ur Analysis, Dynamik und Modellierung, 
Universit\"at Stuttgart, \\ Pfaffenwaldring 57, 70569 Stuttgart, Germany}
\date{\today}

\maketitle

\begin{abstract}
In this paper
 we prove   the validity of  a long wave 
 Whitham approximation  for a system consisting of  a Boussinesq equation coupled with a Klein-Gordon equation. 
 The proof is based on an infinite series of normal form transforma\-tions and an energy estimate.
We expect that the concepts of this paper will be a part of a general approximation theory for Whitham's equations
which are especially used  in the description of 
slow modulations in time and space 
of periodic wave trains in general dispersive wave systems. 
\end{abstract}

\section{Introduction}

For dispersive wave systems there are various long wave approximations, the most prominent ones are KdV approximation and  the Whitham approximation.
For the Boussinesq equation 
\begin{equation} \label{bousseq1}
\partial_t^2u	= \partial_x^2u+\partial_t^2\partial_x^2u+\partial_x^2(u^2),
\end{equation}
with $ x,t,u(x,t)  \in \mathbb{R} $,
as a toy model, with the ansatz 
$$
u(x,t) = \varepsilon^2 A(\varepsilon(x-t) , \varepsilon^3 t) ,
$$
where $ 0 < \varepsilon \ll 1 $ is a small perturbation parameter and $ A(X,T) \in \mathbb{R} $,
we obtain the KdV equation
$$
\partial_T A = \partial_X^3 A/2 + \partial_X(A^2)/2,
$$
and with the ansatz 
$$
u(x,t) =  A(\varepsilon x , \varepsilon t) ,
$$
again with $ A(X,T) \in \mathbb{R} $,
we obtain the Whitham system 
$$
\partial_t^2u	= \partial_x^2u+ \partial_x^2(u^2)
$$
which can be written as a first order system  
$$
\partial_t u	= \partial_x v, \qquad 
\partial_t v 	= \partial_x u+ \partial_x (u^2)
$$
of conservation laws. The approximation of solutions of a dispersive system like the Boussinesq equation \eqref{bousseq1} by solutions to the KdV equation and the Whitham system, respectively, is called KdV approximation and Whitham approximation, respectively.

Both approximations describe the modes which are concentrated at the wave number $ k = 0 $ in Fourier space. See Figure \ref{figure1neu}.

\begin{figure}[htbp]
\begin{center}
       \begin{overpic}[width=250 pt]{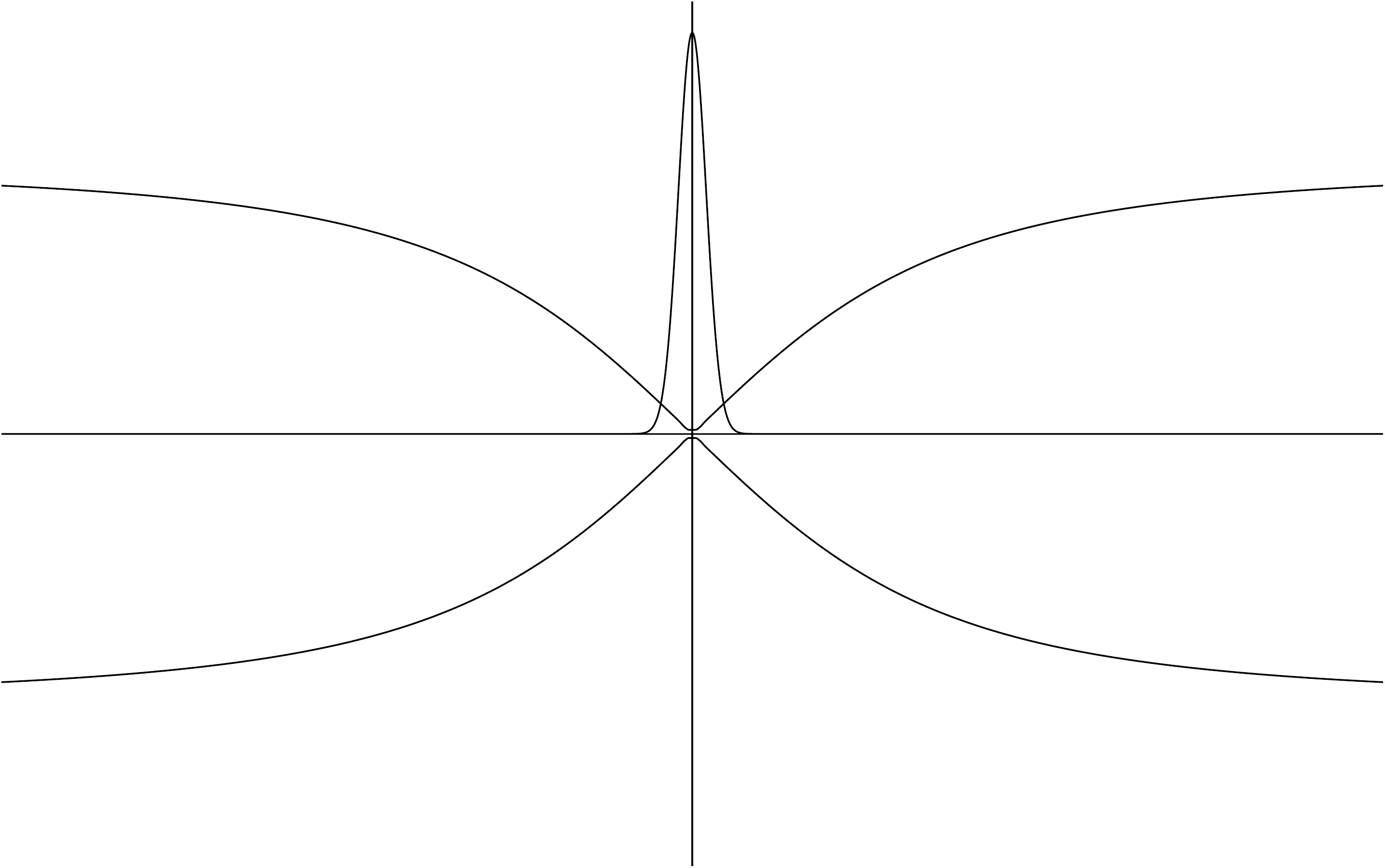}
       \put(95,25){$k$}
       \put(95,45){$\omega_1$}
       \put(95,15){$\omega_{-1}$}
       \put(55,55){$ \varepsilon \widehat{A}(\frac{k}{\varepsilon})  $}
       \end{overpic}
    \caption{\label{figure1neu} Curves of eigenvalues  $ \omega_{\pm 1} $ for the Boussinesq model \eqref{bousseq1}. Since the Fourier transform of  $ \varepsilon^2 A(\varepsilon x) $ is given 
    by $ \varepsilon \widehat{A}(\frac{k}{\varepsilon}) $ the modes  of the KdV and 
    Whitham approximation are strongly concentrated at the wave number $ k = 0 $.}
\end{center}
   \end{figure}

The error made by these approximations can easily be estimated in both 
cases by simple energy estimates, cf. \cite[\S 20]{SUbook2016}, similar to \cite{GS01} by using the long wave character of the approximation.
The fact that the nonlinear terms vanish at the wave number $ k = 0 $, too, allows 
to construct an energy which allows to control the $ \mathcal{O}(\varepsilon^2) $
terms on the  $ \mathcal{O}(1/\varepsilon^3) $ time scale, respectively, to control the $ \mathcal{O}(1) $
terms on the  $ \mathcal{O}(1/\varepsilon) $ time scale.
For the justification of the KdV approximation 
this approach has been used for the water wave problem in  \cite{Cr85,SW00,SW02,D12}
and for the FPU system in \cite{SW99equadiff}.

The  system changes dramatically if more than the two curves of eigenvalues, 
as drawn in Figure \ref{figure1.1}, are present in the problem.
Examples are the water wave problem over a periodic bottom topography,
the poly-atomic FPU model and slow modulations in time and space 
of periodic wave trains.
The simplest toy problem with this property is a system consisting of a 
Boussinesq equation coupled with a  
Klein-Gordon 
equation, namely
\begin{align} 
\partial_t^2u	&= \partial_x^2u+\partial_t^2\partial_x^2u+\partial_x^2(u^2+2uv+v^2),&\label{eq1}&\\
\partial_t^2v	&= \partial_x^2v-2 v-(u^2+2uv+v^2),\label{eq2}
\end{align}
with  $x,t,u(x,t), v(x,t) \in \mathbb{R}$. This set of equations is called 
 KGB system in the following.
The linearization of the KGB model around the trivial solution $u=v=0$ 
has plane wave solutions of the form
$u(x,t)=e^{ikx+i\omega_{\pm1}(k)t}$ and $v(x,t)=e^{ikx+i\omega_{\pm2}(k)t}$ with
$$
\omega_{\pm1}(k)= \pm \rm{sign}(k) \sqrt{\frac{k^2}{k^2+1}} \quad \text {and} \quad  \omega_{\pm2}(k)= \pm \sqrt{k^2+2}.
$$
The curves of eigenvalues are plotted in Figure \ref{figure1.1}.

\begin{figure}[htbp]
\begin{center}
       \begin{overpic}[width=250 pt]{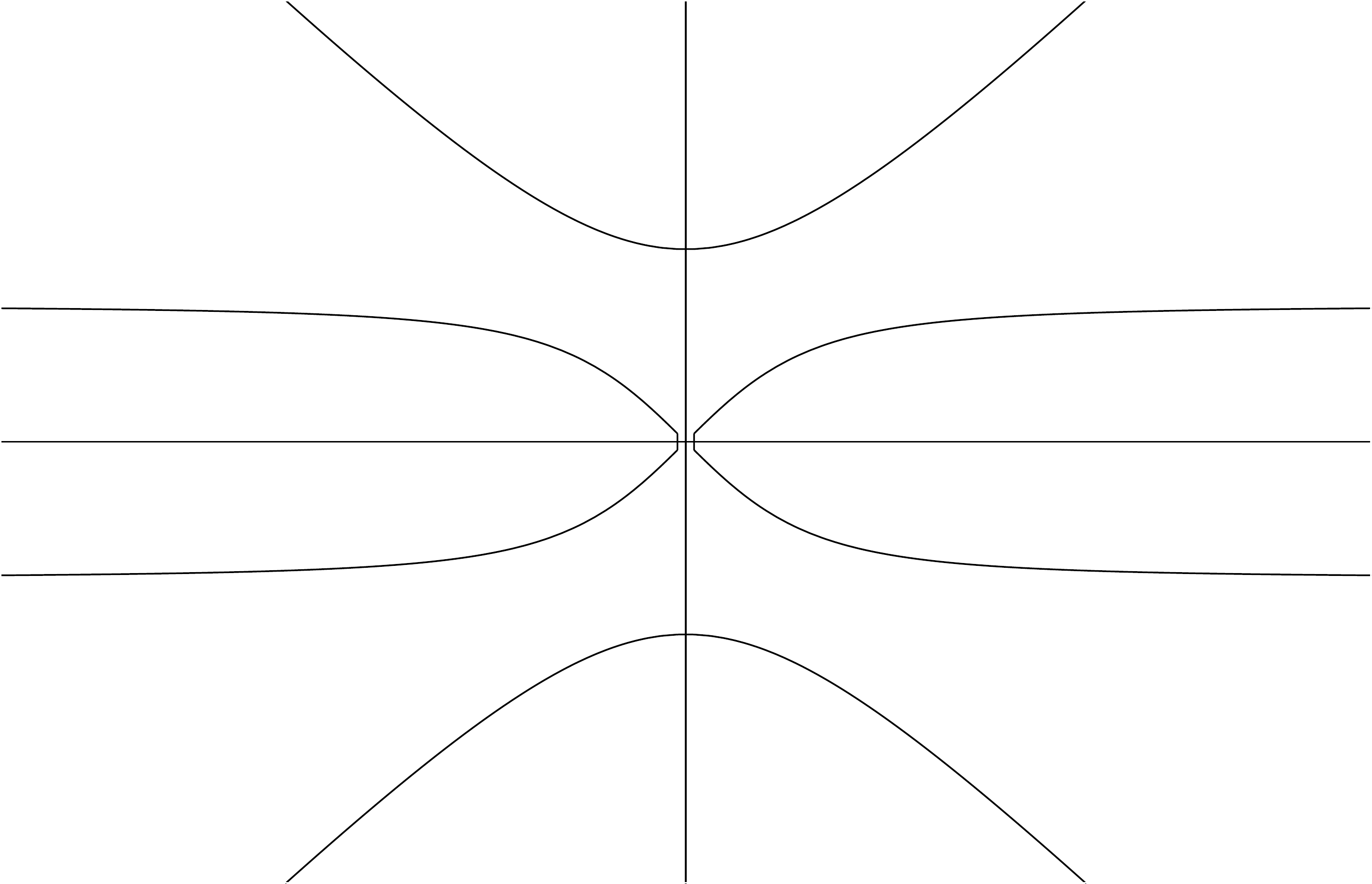}
       \put(98,27){$k$}
       \put(95,44){$\omega_1$}
       \put(95,17){$\omega_{-1}$}
       \put(81,61){$\omega_2$}
       \put(81,1){$\omega_{-2}$}
       \end{overpic}
    \caption{\label{figure1.1} Curves of eigenvalues $\omega_{\pm1}$ and $\omega_{\pm2}$ of the linearized KGB model}
\end{center}
   \end{figure}

In this situation simple energy estimates in general are no longer sufficient 
to justify the long wave approximations
 since now quadratic terms 
are present in the problem which no longer vanish at the wave number $ k= 0 $.
For a KdV approximation with a  combination of 
one normal form transformation and  energy estimates this problem 
has been solved. See \cite{ChSch11} for the KGB system and 
\cite{CCPS12,Wrightgroup} for the poly-atomic FPU model.

For the Whitham approximation a new serious difficulty occurs, namely the fact that 
due to the scaling of the ansatz for the derivation of the Whitham system
infinitely many 
normal form transformations  have to be performed
instead of only one in the KdV case. 
It is the goal of this paper to show that this approach really works and that 
an approximation theorem for the Whitham approximation of the KGB system
can be established.

The ansatz for the derivation of the Whitham system from the KGB system  has the form
\begin{equation} 
\label{eq4} 
\psi_u^{\text{Whitham}}(x,t)=U(\varepsilon x,\varepsilon t) \ \ \text{and} \ \ \psi_v^{\text{Whitham}}(x,t)=V(\varepsilon x,\varepsilon t) \,.
 \end{equation}
Inserting this ansatz into \eqref{eq1} and \eqref{eq2} we find for 
\begin{eqnarray*}
\text{Res}_u(u,v) & = & -\partial_t^2u+\partial_x^2u+\partial_t^2\partial_x^2u+\partial_x^2(u^2+2uv+v^2),
\\
Š\text{Res}_v(u,v)& =& -\partial_t^2v+\partial_x^2v-2v-(u^2+2uv+v^2) 
\end{eqnarray*}
that 
\begin{eqnarray*}
\text{Res}_u(\psi_u^{\text{Whitham}},\psi_v^{\text{Whitham}}) & = &  \varepsilon^2(-\partial_T^2U+\partial_X^2U+\partial_X^2(U^2+2UV+V^2))+\varepsilon^4\partial_T^2\partial_X^2U,
\\
\text{Res}_v(\psi_u^{\text{Whitham}},\psi_v^{\text{Whitham}})& =& -2V-(U^2+2UV+ V^2)+\varepsilon^2(-\partial_T^2V+\partial_X^2V).
\end{eqnarray*}
Hence equating the coefficients of $\varepsilon^0$  in $ \text{Res}_v $ to zero yields
\[
2V+U^2+2UV+V^2=0\,,
\]
and so $ V =H(U)=- U^2/2+\mathcal{O}(U^3)$ due to  the implicit function theorem
for $ U $ and $ V $ of $ \mathcal{O}(1)$,  but sufficiently small.
Equating the coefficients of $\varepsilon^2$ in $ \text{Res}_u $ to zero gives
\begin{equation} 
\label{eqn1} 
-\partial_T^2U+\partial_X^2U+\partial_X^2(U^2+2UV+V^2)=0.
 \end{equation}
By substituting $V=H(U)$ into \eqref{eqn1} we find 
\begin{equation}\label{eq5}
-\partial_T^2U+\partial_X^2U+\partial_X^2(U^2+2UH(U)+H(U)^2)=0.
\end{equation}
Rewriting \eqref{eq5} in conservation law form as 
\begin{eqnarray}
\partial_T U&=&\partial_X W , \label{eq5a}\\
\partial_T W&=&\partial_X (U+U^2+2UH(U)+H(U)^2) \label{eq5b}
\end{eqnarray}
yields Whitham's equations.

It is the  purpose of this paper to prove the following approximation result.
\begin{theorem} \label{theorem1}
There exist  $ C_1 > 0 $, $\varepsilon_0> 0 $ and  $C_2>0$ such that the following holds.
Let $U\in C([0,T_0],H^5(\mathbb{R},\mathbb{R}))$ be a solution of  \eqref{eq5}
with $
\sup_{T\in [0,T_0]} \| U(\cdot,T) \|_{H^5} \leq C_1  $ and let $V=H(U)$.
Then for all $\varepsilon \in(0,\varepsilon_0)$ we have solutions $(u,v)$ of  \eqref{eq1}-\eqref{eq2} such that
\begin{equation} \label{errestthm1}
\sup\limits_{t\in [0,T_0/ \varepsilon]} \sup \limits_{x\in \mathbb{R}}|(u,v)(x,t)-(U,V)(\varepsilon x,\varepsilon t)|\le C_2\varepsilon^{3/2}.
\end{equation}
\end{theorem}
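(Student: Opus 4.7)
The plan is to work with the rescaled error
\begin{equation*}
R=(R_u,R_v) := \varepsilon^{-3/2}\bigl((u,v)-(\tilde\psi_u,\tilde\psi_v)\bigr),
\end{equation*}
where $(\tilde\psi_u,\tilde\psi_v)$ is a refined Whitham ansatz, and to prove $\sup_{0\le t\le T_0/\varepsilon}\|R(t)\|_{H^s}\le C$ for some $s>1/2$; Sobolev embedding together with the smallness of $\tilde\psi-\psi^{\mathrm{Whitham}}$ then yields \eqref{errestthm1}.

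First I would refine the approximation by adding higher-order corrections $\tilde\psi_u=\psi_u^{\mathrm{Whitham}}+\sum_{j\geq 1}\varepsilon^{2j}U_j(\varepsilon x,\varepsilon t)$ and analogously for $\tilde\psi_v$, with the $U_j,V_j$ chosen by formal expansion so that $\mathrm{Res}_u,\mathrm{Res}_v=\mathcal{O}(\varepsilon^N)$ for $N$ large enough that, after division by $\varepsilon^{3/2}$ and integration over $[0,T_0/\varepsilon]$, the residual is negligibly small. Substituting $(u,v)=(\tilde\psi_u,\tilde\psi_v)+\varepsilon^{3/2}R$ and diagonalizing the linear part along the four eigenmodes $\omega_{\pm 1},\omega_{\pm 2}$ yields a first-order system of the schematic form
\begin{equation*}
\partial_t R_j=\ii\omega_j(-\ii\partial_x)R_j+Q_j(\tilde\psi,R)+\varepsilon^{3/2}C_j(R,R)+\varepsilon^{N-3/2}\rho_j,
\end{equation*}
in which the quadratic couplings $Q_j(\tilde\psi,R)$ are $\mathcal{O}(1)$ in operator norm (their symbols are not tamed by the $\partial_x^2$ prefactor because the derivatives fall on $R$, not on $\tilde\psi$). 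Left untreated, these terms would drive Gronwall blow-up of size $\exp(C/\varepsilon)$ on the time scale $\mathcal{O}(1/\varepsilon)$; they must be eliminated before any energy estimate.

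The core of the argument is an infinite sequence of normal form transformations $R\mapsto R+\sum_{k\geq 1}N_k(\tilde\psi,\ldots,\tilde\psi,R)$, where each $N_k$ is an $(k+1)$-linear Fourier multiplier whose symbol inverts a combination of eigenfrequencies of the form $\omega_{\pm j}(\cdot)-\omega_{\pm l}(\cdot+\cdot)$. Because $\tilde\psi$ is spectrally concentrated in an $\mathcal{O}(\varepsilon)$-window around $k=0$, the only resonances that can yield a small denominator are the diagonal Boussinesq self-interactions at $\omega_{\pm 1}(0)=0$; these are precisely the couplings already captured by the Whitham system and can be handled by the same low-frequency energy that works in the pure Boussinesq case. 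All remaining cross-branch denominators reduce to $\pm\sqrt{2}+\mathcal{O}(\varepsilon)$ and are uniformly invertible. A single normal form is insufficient because each $N_k$ reintroduces new quadratic-in-$R$ couplings carrying one extra factor of $\tilde\psi$; iterating cancels the dangerous terms to all orders, and the resulting formal series is to be shown convergent in $H^s$ via tame bounds on each step, uniform in $\varepsilon$.

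Once the transformed variable $\tilde R$ satisfies an equation free of dangerous quadratic coupling, a standard energy functional on $H^s$ closes via Gronwall on $[0,T_0/\varepsilon]$ against a right-hand side of size $\mathcal{O}(\varepsilon^{1/2})$ coming from the cubic $C_j(R,R)$ term and the refined residual. I expect the main obstacle to be the uniform-in-$\varepsilon$ Sobolev control and near-identity invertibility of the infinite normal-form series on the ball of interest: because $\tilde\psi$ is not small in amplitude (only spectrally localized), one has to verify at each step that the $\varepsilon$-scale Fourier concentration of $\tilde\psi$ systematically tempers the $(k+1)$-linear multiplier estimate, so that successive terms form a summable series whose sum defines a bilipschitz change of variable between $R$ and $\tilde R$.
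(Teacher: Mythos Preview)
Your overall architecture matches the paper's: refine the ansatz, diagonalize, run an infinite cascade of normal form transformations to remove the non-resonant cross-branch couplings, then close an energy estimate on the remaining resonant terms. Two points, however, are genuinely off.

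\textbf{Convergence of the normal-form series.} You write that ``$\tilde\psi$ is not small in amplitude (only spectrally localized)'' and expect the $\varepsilon$-scale Fourier concentration to temper each successive multilinear term into a summable series. This is not how the convergence works, and in fact spectral concentration alone cannot do it: in the natural norm (the paper's $X^{s,\varepsilon}$, designed precisely so that $\|\widehat\Psi\|_{X^{s,\varepsilon}}$ is $\mathcal O(1)$ uniformly in $\varepsilon$) each normal-form step gains only a factor of $\|\Psi\|$, with no extra power of $\varepsilon$. The series is geometric in $\|\Psi\|$, and it sums only because the theorem \emph{assumes} $\sup_T\|U\|_{H^5}\le C_1$ with $C_1$ sufficiently small. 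That smallness hypothesis is not cosmetic; it is exactly what drives the convergence (see the paper's Lemma~3.4 and the remark following it). If you try to run the argument with $\tilde\psi$ of order one relying solely on spectral localization, the iteration does not close.

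\textbf{The energy step.} After the infinite transformation the resonant terms $i\omega_1\int\widehat f_{1,res}\,\widehat{\mathcal R}_{\pm1}$ and $i\omega_2^{-1}\int\widehat f_{2,res}\,\widehat{\mathcal R}_{\pm2}$ are still $\mathcal O(1)$; they do not become $\mathcal O(\varepsilon^{1/2})$. A ``standard'' $H^s$ energy does not close: one needs a modified energy $\mathcal E_s=E_s+\text{(bilinear correction in }\widehat f_{res})$ and the symmetry/near-symmetry properties $\widehat f_w(k,k-m)=\overline{\widehat f_w(k,m-k)}$ and $\sup_k\|\widehat f_w(k,\cdot)-\widehat f_w(k-\cdot,\cdot)\|_{L^1}\le C\varepsilon$ (the paper's Lemma~4.1) to push the time derivative onto $\widehat f_w(\cdot,\cdot,\varepsilon t)$, which is where the crucial factor of $\varepsilon$ finally appears. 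With a right-hand side of size $\mathcal O(\varepsilon^{1/2})$, Gronwall over $[0,T_0/\varepsilon]$ would give growth $e^{CT_0\varepsilon^{-1/2}}$ and fail; you need $\partial_t\mathcal E_s\le C\varepsilon(\mathcal E_s+\varepsilon^{1/2}\mathcal E_s^{3/2}+1)$.

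A minor point: you do not need residual order $\mathcal O(\varepsilon^N)$ for large $N$. A single correction $V_2$ to $\psi_v$ already gives $\mathrm{Res}=\mathcal O(\varepsilon^{7/2})$ in $H^s$ (and $\mathcal O(\varepsilon^{5/2})$ after applying $\omega_1^{-1}$), which is exactly enough for $\beta=3/2$.
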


\begin{remark}{\rm 
The $H^5$-control of $U$ is needed to estimate the residual generated by an improved approximation ansatz in $H^1$, see Lemma \ref{lemma1}. More generally, if we have  $U\in C([0,T_0],H^{s+4}(\mathbb{R},\mathbb{R}))$ 
with $
\sup_{T\in [0,T_0]} \| U(\cdot,T) \|_{H^{s+4}} \leq C_1  $ for $s\geq 1$, then our proof of Theorem \ref{theorem1} yields the error estimate
\[
\sup\limits_{t\in [0,T_0/ \varepsilon]} \|(u,v)(\cdot,t)-(U,V)(\varepsilon \,\cdot,\varepsilon t)\|_{H^{s}} \le C_2\varepsilon^{3/2}.
\] }
\end{remark} 

The question occurs whether to a given initial condition 
the associated solution of the KGB system can be approximated 
by a Whitham approximation. This will be discussed in Section \ref{secdiscussion}.

In order to explain why infinitely many normal from transformations 
have to be performed in the proof of Theorem \ref{theorem1}  we explain 
the strategy of our proof in more detail.
We write system \eqref{eq1}-\eqref{eq2} as a first order evolutionary system 
of the form 
$$ 
\partial_t \mathcal{W} = \Lambda \mathcal{W} + B(\mathcal{W},\mathcal{W}), 
$$ 
where $ \Lambda $ is a linear skew symmetric operator and $ B $ is a bilinear symmetric mapping.
By adding higher order terms to the approximation \eqref{eq4} we construct in  Section \ref{sec2} an approximation $ \psi $ which is 
$ \mathcal{O}(\varepsilon^2)$-close to $(\psi_u^{\text{Whitham}}, \psi_v^{\text{Whitham}})$ and satisfies formally 
$$
\text{Res}(\psi)= - \partial_t \psi + \Lambda \psi + B(\psi,\psi) =  \mathcal{O}(\varepsilon^4).
$$ 
The error function $ R $ defined by $ \mathcal{W}(x,t) = \psi( x, t) + \varepsilon^{\beta} R(x,t) $ fulfills
$$ 
\partial_t R = \Lambda R + 2 B(\psi,R) +  \varepsilon^{\beta} B(R,R) +  \varepsilon^{-\beta} \text{Res}(\psi).
$$ 
To prove an $ \mathcal{O}(1)$-bound for $ R $ on an $ \mathcal{O}(1/\varepsilon)$-time scale we have to control the terms on the right hand side on this long time scale.
The  first term is skew-symmetric and will lead to oscillations without any growth rates.
The last term can be $ \mathcal{O}(\varepsilon)$-bounded if $ \beta \leq 3 $. If $ \beta $ is chosen larger than $ 1 $ 
 the third terms gives a bound smaller than $ \mathcal{O}(\varepsilon)$.
However, the second term $ 2 B(\psi,R)  $ is only $ \mathcal{O}(1)$-bounded and can lead to $ \mathcal{O}(1)$ growth rates. In order to show that an $ \mathcal{O}(e^{t})$ growth  does not occur we use normal form transformations and energy estimates. Normal form transformations are near identity changes of variables of the form
\[
{R}_1=R+M(\psi,R)
\]
where  $M$ is a suitably chosen bilinear mapping. Eliminating the term $B(\psi,R)$ by such a normal form transformation is only  possible if the non-resonance condition 
$$
\inf_{j,n \in \{\pm1,\pm2\}, k \in \R} |\omega_j(k) - \omega_1(0) - \omega_n(k)| > 0
$$
is satisfied, which is not the case for the KGB model since $ \omega_1(0) = 0 $. But since the less restrictive non-resonance condition 
\begin{equation} \label{nonres2}
\inf_{k \in \R} \, \big\{ |\omega_{\pm1}(k) - \omega_1(0) - \omega_{\pm2}(k)|, |\omega_{\pm2}(k) - \omega_1(0) - \omega_{\pm1}(k)| \big\}  > 0
\end{equation}
is true for the KGB model the term $B(\psi,R)$ can be split into a resonant and a non-resonant part, i.e.,
$$ 
B(\psi,R) = B_{res}(\psi,R) + B_{non}(\psi,R)\,,
$$ 
and the non-resonant part $B_{non}(\psi,R)$ can be eliminated by a normal form transformation. After the normal form transformation in the equation for the new error function ${R}_1$  new terms of order $ \mathcal{O}(1) $
appear. They can be split again   into resonant and 
non-resonant terms. Another normal form transformation is necessary to eliminate these new non-resonant terms, but again 
terms of $ \mathcal{O}(1) $ are created. However, they are cubic w.r.t.~$ \psi$.
This goes ad infinitum and so the convergence of the composition of these infinitely many transformations
has to be proven. Since the $ n $-th transformation is of order $ \mathcal{O}(\| \psi \|^n) $
the convergence finally can be established with the help of the geometric series for $ \| \psi \| = \mathcal{O}(1) $, but sufficiently small w.r.t.~some $\Vert \cdot \Vert$-norm.
After all these transformations the equation for the transformed error $R_{\infty}$ takes the form 
  $$ 
\partial_t  R_{\infty}   = \Lambda R_{\infty} + F(\psi,R_{\infty}) +  \mathcal{O}(\varepsilon)
$$  
where $ F $ is a function which is linear w.r.t.~$ R_{\infty} $ and which contains infinitely many resonant terms.
Since all these terms now in contrast to the original $ B(\psi,R) $
have a long-wave character w.r.t.~$ t $, i.e., these terms depend explicitly  only on $\varepsilon t$ and not 
on $ t $, a suitably chosen energy $ E(R_{\infty}) $ satisfies 
  $$ 
\frac{d}{dt} E(R_{\infty})  =  \mathcal{O}(\varepsilon),
$$ 
and so an $  \mathcal{O}(1) $-bound for the transformed error $ R_{\infty} $ and the original error $R$, respectively, can be established on the 
$ \mathcal{O}(\varepsilon^{-1})$-time scale by applying Gronwall's inequality. The series of normal form transformations  can be found in Section 
\ref{sec3} and the energy estimates in Section \ref{sec4}.

We close this introduction with a number of remarks 
\begin{remark}{\rm 
Proving Theorem \ref{theorem1} is a nontrivial task since 
we have to prove an $ \mathcal{O}(1)$-bound for the error on an $ \mathcal{O}(1/\varepsilon)$-time scale. 
There exists a number of counter examples \cite{Schn96,SSZ14} where a formally derived amplitude equation 
makes wrong predictions about the dynamics of the original system.
}
\end{remark}

\begin{remark}{\rm 
Whitham's equations belong to the class of generic and universal amplitude 
equations containing the KdV
equation, the NLS equation, the Ginzburg-Landau equation, Burgers' equation, and so-called phase diffusion equations. 
Amplitude equations play an important role in the
description of spatially extended conservative or dissipative physical
systems, where they can be formally derived with the help of a multiple scaling ansatz. Whitham's equations are especially used to describe slow modulations in time and space of periodic wave trains in dispersive wave systems.
There exist  a series of approximation results for the
Ginzburg-Landau approximation, for instance in 
\cite{CE90,vH91,Sch94a,Sch94b}, for the 
KdV  approximation, for instance in  \cite{Cr85,SW00,SW02,D12}, and 
for  the NLS approximation, for instance in 
\cite{Kal87,Sch98,Sch05NLS,BSTU06,DS06,TW11,DSW12}. 
Approximation results for so-called phase diffusion
equations, Burgers equation or conservation laws describing modulations of periodic  waves in dissipative systems 
can be found in \cite{MS04b,MS04a,DSSS09}.
In the
conservative case, i.e. for Whitham's equations, the first nonlinear 
approximation result has been established in \cite{SD09}, namely the validity of  
Whitham's equations for the NLS equation as original system.  However, the spectral picture 
of the problem considered in \cite{SD09}
is as drawn in Figure \ref{figure1neu} and not as in Figure \ref{figure1.1}.}
\end{remark}
\begin{remark}{\rm 
Whitham derived his equations first in \cite{Wh65a,Wh65b} and they are
still a subject of active research, cf. \cite{DHM06,M06}. 
They  are an amplitude system for which so far there has not been established a satisfying theory which shows mathematically rigorously
that the original system behaves approximately as predicted by the associated amplitude equation.}
\end{remark}
\begin{remark}{\rm 
The Boussinesq equation \eqref{eq1} is a model equation for the water wave problem, whereas the solution of the Klein-Gordon equation \eqref{eq2} represents a scalar quantum field. Here we ignore this origin and couple them solely with the goal to obtain a spectral picture 
as plotted in Figure \ref{figure1.1}.}
\end{remark}
\begin{remark}{\rm
More generally, Whitham's equations are universal approximation equations for large classes of nonlinear PDEs of periodic wave type, see for example \cite{SD09}. Very often they are derived from the Lagrangian of the underlying problem
leading to a system of conservation laws, similar to  \eqref{eq5a}-\eqref{eq5b}. As mentioned above, the resonance structure of the KGB model is the same as
in the situation in which one is really interested in, namely the description of slow modulations in time and space of a periodic traveling wave in a dispersive wave system. By linearizing around the periodic wave in a co-moving frame, we obtain an eigenvalue problem which is periodic in the spatial variable. Its solutions are given by Bloch modes $e^{{\rm{i}}lx+{\rm{i}}\omega_n(l)t}v_n(l,x)$ with $n\in \mathbb{Z}\setminus \{0\}$, $l\in[\frac{-1}{2L},\frac{1}{2L})$, and where  $v_n$ possesses the same periodicity $L$ w.r.t. $x$ as the periodic wave. The curves $l\mapsto\omega_n(l)$ are ordered by $\omega_n(l)\le \omega_{n+1}(l)$ and by $\omega_n(l)=-\omega_{-n}(l)$. In general we have $\omega_{\pm 1}(0)=0$ and $\omega_{\pm 2}(0)\neq 0$ since in such systems the periodic wave is accompanied by an at least two-dimensional family of periodic waves. Whitham's equations describe the dynamics of the modes associated with the two curves $\omega_{\pm 1}$ in the  limit $l \rightarrow 0$, see Figure \ref{figure1.1}.
}\end{remark}
\begin{remark}{\rm \label{rem6}
As explained above we think that our analysis is a necessary step for 
the validity of Whitham's equations in the general situation.
However, before applying these ideas a number of additional questions have 
to be answered, most essential:
how to extract the wave numbers in non $ S^1 $-symmetric systems such that these satisfy 
equations which are suitable for existing functional analytic tools?
}\end{remark}
\begin{remark}{\rm \label{rem7}
Recently Whitham's equations have been in the focus of investigations concerning  
modulations of periodic wave trains in 
dissipative systems containing conservation laws \cite{JNRZ14}.
The problems with quadratic resonances addressed in the present work do not appear in the dissipative situation. 
We expect that 
the analysis for a justification result in the sense of Theorem \ref{theorem1} in the dissipative situation is  very similar to the  
one given in \cite[Section 6]{DSSS09} where a single conservation law has been justified 
as an amplitude equation.
}\end{remark}
\begin{remark}{\rm 
We expect that an underlying Hamiltonian structure 
may allow to find an energy which allows 
to perform a justification analysis of the Whitham approximation without 
the need of  infinitely many transformations. However, such an approach 
strongly depends on the problem. In contrast, the method presented here is rather
independent of the special original system 
in this class of problems
and hence more robust.
}\end{remark}

\noindent \textbf{Notation.} Possibly different constants that can be chosen 
independent of $ 0 < \varepsilon \ll1 $ are denoted by the same symbol $ C $.  From now on we write $\int$ instead of $\int_{-\infty}^{\infty}$. 
The space $ H^s_m $ consists of $ s $-times weakly
differentiable functions for which 
$\| u \|_{H^s_m} = \| u \rho^m \|_{H^s} = (\sum_{j= 0}^s \int | \partial_x^j
(u \rho^m) |^2(x) dx)^{1/2}$, with $ \rho(x) = \sqrt{1+x^2} $, is finite, 
 where we do not distinguish between scalar and vector-valued 
functions or real- and complex-valued functions.  We use $ H^s $ as an abbreviation 
for $ H^s_0 $. Moreover, we use the space $L^1_m$ with the norm $\| u \|_{L^1_m} = \| u \rho^m \|_{L^1}$.    The Fourier transform
of a function $u$ is denoted by 
$$(\mathcal{F}u)(k) = \widehat{u}(k) = \frac{1}{2 \pi} \int u(x) e^{-i k x} dx$$
and is an isomorphism between $ H^s_m $ and $ H^m_s $.
The point-wise multiplication $(uv)(x) = u(x)v(x)$ in $x$-space corresponds to the
convolution
$$ (\widehat{u}*\widehat{v})(k) = \int \widehat{u}(k-l)\widehat{v}(l) dl $$
in Fourier space. 
\medskip

\textbf{Acknowledgments:} The paper is partially supported
by the Deut\-sche Forschungsgemeinschaft DFG under the grant Schn520/9. The authors thank Mariana Haragus, James Kennedy and an unknown referee for their useful comments.

\section[The improved approximation]{The improved approximation and estimates for the residual}

\label{sec2}

As explained above we need  the residual to be small. With 
the approximation defined in \eqref{eq4} we formally find that 
$
\text{Res}_u(\psi_u^{\text{Whitham}}, \psi_v^{\text{Whitham}}) = \mathcal{O}(\varepsilon^4) $, but only $ \text{Res}_v(\psi_u^{\text{Whitham}}, \psi_v^{\text{Whitham}}) = \mathcal{O}(\varepsilon^2)
$.
In order to have $ \text{Res}_v= \mathcal{O}(\varepsilon^4) $, too, we extend the  ansatz  \eqref{eq4}  to 
\begin{equation}\label{ansatz}
\psi_u(x,t)=U(\varepsilon x,\varepsilon t)  \quad \text{and} \quad 
\psi_v(x,t)=V(\varepsilon x,\varepsilon t)+\varepsilon^2V_2(\varepsilon x,\varepsilon t)\,.
\end{equation}
We find
\begin{eqnarray*}
\text{Res}_v(\psi_u, \psi_v)
&=&-V-(U^2+2UV+V^2)+\varepsilon^2(-\partial_T^2V+\partial_X^2V-V_2-2UV_2-2VV_2)\\
& & +\varepsilon^4(-\partial_T^2V_2+V_2^2+\partial_X^2V_2).
\end{eqnarray*}
We formally obtain $ \text{Res}_v(\psi_u, \psi_v) = \mathcal{O}(\varepsilon^4) $ by choosing 
\begin{equation} \label{eq7}
V_2=\frac{\partial_X^2V-\partial_T^2V}{1+2U+2V}.
\end{equation}
For  $U$ and $V$ sufficiently small, but still of order $ \mathcal{O}(1) $, 
the function  $V_2$ is well-defined.
\begin{remark} {\rm In the following we estimate the difference between a true solution of 
\eqref{eq1}-\eqref{eq2} and the improved approximation defined in  \eqref{ansatz}. The estimate for the difference between a true solution of 
\eqref{eq1}-\eqref{eq2} and the original approximation defined in \eqref{eq4} then follows by the triangle inequality using
$$
\sup\limits_{t\in [0,T_0/ \varepsilon]} \sup \limits_{x\in \mathbb{R}}|(\psi_u,\psi_v)(x,t)-(\psi_u^{\text{Whitham}}, \psi_v^{\text{Whitham}})(x,t)|\le C\varepsilon^{2}.
$$}
\end{remark} 
The difference between a true solution of 
\eqref{eq1}-\eqref{eq2} and the improved approximation 
defines the error functions $R_u$ and $R_v$ by 
$$
\varepsilon^{\beta}R_u=u-\psi_u 
\quad \text{and} \quad \varepsilon^{\beta}R_v=v-\psi_v
$$ 
with a suitably chosen $\beta $. 
The  error functions satisfy 
\begin{eqnarray} \label{fly1}
\partial_t^2R_u&=&\partial_x^2R_u+\partial_t^2\partial_x^2R_u+2 \partial_x^2(\psi_uR_u+\psi_vR_u+\psi_uR_v+\psi_vR_v) \label{errorcalcu}\\ &&\qquad +\varepsilon^{\beta}\partial_x^2(R_u^2+2R_uR_v+R_v^2)\nonumber\\ &&\qquad \qquad +\varepsilon^{-\beta}\underbrace{\left(-\partial_t^2\psi_u+\partial_x^2\psi_u+\partial_t^2\partial_x^2\psi_u+\partial_x^2(\psi_u^2+2\psi_u\psi_v+\psi_v^2)\right)}_{=\text{Res}_u(\psi_u,\psi_v)}\,,\nonumber\\ \label{fly2}
\partial_t^2R_v&=&\partial_x^2R_v-R_v- 2(R_u\psi_u+R_u\psi_v+R_v\psi_u+R_v\psi_v ) \label{errorcalcv} \\ && \qquad-  \varepsilon^{\beta}(R_u^2+2R_uR_v+R_v^2)\nonumber \\
&&\qquad \qquad +\varepsilon^{-\beta}\underbrace{(-\partial_t^2\psi_v+\partial_x^2\psi_v-\psi_v- (\psi_u^2+2\psi_u\psi_v+\psi_v^2))}_{=\text{Res}_v(\psi_u,\psi_v)}\,,\nonumber
\end{eqnarray}
where the residual terms are formally of order 
$ \mathcal{O}(\varepsilon^4)$.
These equations will be solved in some Sobolev spaces.
Estimating the residual terms in these Sobolev spaces will lose $\varepsilon^{-1/2}$
due to the scaling properties of the $L^2-$norm, namely 
\begin{equation} \label{scaling}
\left(\int{|U(\varepsilon x)|^2dx}\right)^{1/2}=\left(\varepsilon^{-1}\int|U(X)|^2dX\right)^{1/2}, 
\end{equation}
and so we have the following lemma.

\begin{lemma} \label{lemma1}
For $s\ge 1$ there exist  $ C_1 > 0 $, $\varepsilon_0> 0 $ and  $C_2>0$ such that the following holds.
Let $U\in C([0,T_0],H^{s+4}(\mathbb{R},\mathbb{R}))$ be a solution of  \eqref{eq5}
with $
\sup_{T\in [0,T_0]} \| U(\cdot,T) \|_{H^{s+4}}$ $\leq C_1  $, let $V=H(U)$, and let $ V_2 $ be defined 
in \eqref{eq7}.
Then  for all $\varepsilon\in (0,\varepsilon_0)$ we have 
\[
\sup\limits_{t\in[0,T_0/\varepsilon]}(\|\text{\rm Res}_u(\psi_u,\psi_v)\|_{H^s} +\|\text{\rm Res}_v(\psi_u,\psi_v)\|_{H^s})< C_2 \varepsilon^{7/2}.
\]
\end{lemma}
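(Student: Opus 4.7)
The plan is to substitute the improved ansatz \eqref{ansatz} into the residuals and exploit the three defining identities---the Whitham equation \eqref{eq5} for $U$, the implicit identity $V=H(U)$, and the choice \eqref{eq7} of $V_2$---to cancel every formal power of $\varepsilon$ below $\varepsilon^4$. A direct computation, tracking the fact that each derivative applied to $U(\varepsilon x,\varepsilon t)$ or $V_2(\varepsilon x,\varepsilon t)$ produces a factor of $\varepsilon$, would then yield
\[
\text{Res}_u(\psi_u,\psi_v)(x,t)=\varepsilon^4 F_u(\varepsilon x,\varepsilon t), \qquad \text{Res}_v(\psi_u,\psi_v)(x,t)=\varepsilon^4 F_v(\varepsilon x,\varepsilon t),
\]
where $F_u,F_v$ are polynomial expressions in $U$, $V=H(U)$, $V_2$ and their space-time derivatives of order at most two, together with a harmless extra factor of $\varepsilon^2$ coming from the $V_2^2$-contributions. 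For instance, after invoking \eqref{eq5}, the $u$-residual reduces to $\varepsilon^4\bigl[\partial_T^2\partial_X^2 U+2\partial_X^2(UV_2+VV_2)\bigr]+\varepsilon^6\partial_X^2(V_2^2)$, evaluated at $(\varepsilon x,\varepsilon t)$, and analogously for $\text{Res}_v$ after using \eqref{eq7}.

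Next I would convert the remaining time derivatives of $U$ into spatial derivatives by repeated use of \eqref{eq5}, so that for example $\partial_T^2\partial_X^2 U$ becomes $\partial_X^4 U$ plus polynomial nonlinearities in $U$ and $H(U)$. This shows that $F_u$ and $F_v$ depend on at most four spatial derivatives of $U$. The Sobolev embedding $H^{s+4}\hookrightarrow L^\infty$ together with the smallness of $C_1$ ensures that the denominator $1+2U+2V$ in \eqref{eq7} stays uniformly bounded away from zero, so that $V_2$ depends smoothly on $U$ through composition; the usual Moser/tame product and composition estimates in $H^{s+2}$ then give
\[
\sup_{T\in[0,T_0]}\bigl(\|F_u(\cdot,T)\|_{H^s}+\|F_v(\cdot,T)\|_{H^s}\bigr)\leq C\bigl(\sup_{T\in[0,T_0]}\|U(\cdot,T)\|_{H^{s+4}}\bigr) \leq C.
\]

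The final step is the rescaling. The elementary identity
\[
\|G(\varepsilon\,\cdot)\|_{H^s}\leq C\varepsilon^{-1/2}\|G\|_{H^s},\qquad 0<\varepsilon\leq 1,
\]
which follows from \eqref{scaling} and the fact that derivatives applied to $G(\varepsilon\,\cdot)$ only multiply by extra factors $\varepsilon\leq 1$, combined with the $\varepsilon^4$ prefactor above, gives the claimed bound $C\varepsilon^{7/2}$, uniformly for $t\in[0,T_0/\varepsilon]$ since $\|U(\cdot,\varepsilon t)\|_{H^{s+4}}$ is controlled on this whole range by assumption. The one piece of real care needed is the derivative bookkeeping that pins down the regularity: the worst contribution is $\partial_T^2\partial_X^2 U$, and in $\partial_X^2(UV_2)$ the factor $V_2$ already costs two derivatives of $U$, so $F_u$ lies in $H^s$ exactly when $U\in H^{s+4}$. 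This is why the regularity hypothesis $H^{s+4}$ is sharp for this residual estimate, consistent with the remark following Theorem \ref{theorem1}.
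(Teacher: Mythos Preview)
Your proposal is correct and follows essentially the same approach as the paper's own proof: insert the improved ansatz, use the defining identities to reduce the residuals to $\varepsilon^4$ times smooth functions in the slow variables, estimate those in $H^s_X$ via product/composition bounds, and then pick up the $\varepsilon^{-1/2}$ loss from the scaling identity \eqref{scaling}. The paper phrases the product estimate as ``one factor in $H^s$, the others in $C^s_b$'' rather than invoking Moser/tame estimates, and it identifies $\partial_X^2 V_2$ (rather than $\partial_T^2\partial_X^2 U$) as the term forcing the $H^{s+4}$ hypothesis, but these are cosmetic differences in the same argument.
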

{\bf Proof.}  Combining the formal calculations from above with the scaling properties \eqref{scaling} 
of the $ L^2 $-norm yields the required estimates. In order to avoid losing more powers of $ \varepsilon $ 
in products arising in $\text{\rm Res}_{u,v}$ only one factor is estimated in $ H^s $. All others are estimated in $ C^s_b $.
The assumption $U(\cdot,T)\in H^{s+4}(\mathbb{R},\mathbb{R})$ is necessary to estimate $\partial_X^2V_2\in H^s(\mathbb{R},\mathbb{R})$ via $V_2=\mathcal{O}(\partial_X^2V)$ due to \eqref{eq7}.  \qed 
\medskip

When writing \eqref{errorcalcu}-\eqref{errorcalcv} as a first order system we additionally need

\begin{lemma} \label{lemma1b}
Under the assumptions of Lemma \ref{lemma1}
 for all $\varepsilon\in (0,\varepsilon_0)$ we have 
\[
\sup\limits_{t\in[0,T_0/\varepsilon]}(\|\omega_1^{-1} \text{\rm Res}_u(\psi_u,\psi_v)\|_{H^s} +\|\omega_2^{-1} \text{\rm Res}_v(\psi_u,\psi_v)\|_{H^s})< C_2\varepsilon^{5/2}.
\]
\end{lemma}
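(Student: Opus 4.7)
The plan is to treat the two terms separately, exploiting the different behavior of $\omega_1$ and $\omega_2$ as Fourier multipliers. Since $\omega_2(k)=\pm\sqrt{k^2+2}$ is bounded below by $\sqrt{2}$, its reciprocal is a bounded multiplier on $H^s$, and Lemma \ref{lemma1} immediately gives
\[
\|\omega_2^{-1}\text{Res}_v(\psi_u,\psi_v)\|_{H^s}\le C\|\text{Res}_v(\psi_u,\psi_v)\|_{H^s}\le C\varepsilon^{7/2}\le C\varepsilon^{5/2}.
\]

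The $\omega_1^{-1}$ estimate is more delicate because $\omega_1(k)=k/\sqrt{k^2+1}$ vanishes at $k=0$. The key structural observation is that $\text{Res}_u$ can be written as $\partial_x^2$ applied to a long-wave function. Indeed, the only term in $\text{Res}_u$ carrying a bare $\partial_t^2$ is $-\partial_t^2\psi_u=-\varepsilon^2(\partial_T^2 U)(\varepsilon x,\varepsilon t)$, and the Whitham equation \eqref{eq5} rewrites this as $-\varepsilon^2\partial_X^2[U+U^2+2UV+V^2](\varepsilon x,\varepsilon t)$. Combining with the other contributions and using the definition \eqref{eq7} of $V_2$ to cancel the $\varepsilon^2$-terms, a short calculation yields
\begin{equation*}
\text{Res}_u(\psi_u,\psi_v)=\partial_x^2 G,\qquad G(x,t)=\varepsilon^2 F_1(\varepsilon x,\varepsilon t)+\varepsilon^4 F_2(\varepsilon x,\varepsilon t),
\end{equation*}
with $F_1=\partial_T^2 U+2V_2(U+V)$ and $F_2=V_2^2$; their $H^{s+2}$-norms are controlled by $\|U\|_{H^{s+4}}$, using $V=H(U)$ and \eqref{eq7}.

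The composed symbol $\omega_1^{-1}(k)\cdot(ik)=i\sqrt{k^2+1}$ is of order at most $1+|k|$, so $\omega_1^{-1}\partial_x=i\sqrt{1-\partial_x^2}$ is a pseudodifferential operator of order one and $\omega_1^{-1}\text{Res}_u=i\sqrt{1-\partial_x^2}\,\partial_x G$. I would then estimate directly in Fourier space, using $\hat G(k)=\varepsilon\hat F_1(k/\varepsilon,\varepsilon t)+\varepsilon^3\hat F_2(k/\varepsilon,\varepsilon t)$ (so $\hat G$ is concentrated on $|k|\lesssim\varepsilon$). After substituting $K=k/\varepsilon$ and using $(1+\varepsilon^2K^2)^{s+1}\le (1+K^2)^{s+1}$, one obtains
\begin{equation*}
\|\omega_1^{-1}\text{Res}_u\|_{H^s}^2=\int(1+k^2)^{s+1}k^2|\hat G(k)|^2\,dk\le C\varepsilon^5\bigl(\|F_1\|_{H^{s+2}}^2+\varepsilon^4\|F_2\|_{H^{s+2}}^2\bigr),
\end{equation*}
which gives the desired $\mathcal{O}(\varepsilon^{5/2})$ bound.

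The main obstacle is the singularity of $\omega_1^{-1}$ at $k=0$: a crude multiplier bound based on $\omega_1^{-1}\partial_x^2\colon H^{s+2}\to H^s$ would only yield $\mathcal{O}(\|G\|_{H^{s+2}})=\mathcal{O}(\varepsilon^{3/2})$, one power of $\varepsilon$ short of what is claimed. Reaching $\mathcal{O}(\varepsilon^{5/2})$ requires both the structural identity $\text{Res}_u=\partial_x^2 G$ and a Fourier-side estimate that exploits the scaling $\hat G\sim\varepsilon$ on $|k|\lesssim\varepsilon$ rather than a Sobolev-norm bound on $G$.
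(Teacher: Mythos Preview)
Your argument is correct and follows essentially the same route as the paper's proof. The paper observes that every term in $\text{Res}_u$ carries at least one spatial derivative (converting the $\partial_t^2\psi_u$ term via the Whitham equation \eqref{eq5}, exactly as you do), so that $\omega_1^{-1}$ is well-defined and the long-wave scaling produces a loss of one power of $\varepsilon$ relative to Lemma~\ref{lemma1}; you carry this out more explicitly by writing $\text{Res}_u=\partial_x^2 G$ and computing the Fourier integral directly, but the mechanism is identical.
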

{\bf Proof.} 
The terms of the residual have either spatial derivatives in front or are time derivatives which can be expressed via \eqref{eq5} as terms with spatial derivatives in front. Hence, in Fourier space all terms of the residual have at least 
a factor $ k $ and so the application of $ \omega_1(k)^{-1} $ to these terms is well-defined.
However, because of the long-wave character of the ansatz \eqref{ansatz} there is a loss of $ \mathcal{O}(\varepsilon^{-1}) $ since one derivative is 
canceled by the application of $ \omega_1(k)^{-1} $. Hence, the assertion of the lemma follows from Lemma \ref{lemma1}.
 \qed

\section{The series of normal form transformations}

\label{sec3}

In order to establish the validity of Theorem \ref{theorem1}
we have to prove an $ \mathcal{O}(1)$-bound for $ R_u $ and $ R_v $ on an $ \mathcal{O}(\varepsilon^{-1})$ 
time scale. Therefore we need to control the terms on the right hand sides of \eqref{fly1} and \eqref{fly2} on this long time scale. As already said, the main part of this paper is devoted to the handling of the linear $ \psi $-dependent  terms.
Therefore, the   error equations are rewritten in  the form
\begin{align}
\partial_t^2R_u&=\partial_x^2R_u+\partial_t^2\partial_x^2R_u+2 \partial_x^2(\psi_uR_u+\psi_vR_u+\psi_uR_v+\psi_vR_v)+\varepsilon p_{u,1}, \label{eq8}\\
\partial_t^2R_v&=\partial_x^2R_v-2 R_v- 2(\psi_uR_u+2\psi_vR_u+ \psi_uR_v+\psi_vR_v)+ \varepsilon p_{v,1}, \label{eq9}  
\end{align}
where the terms $p_{u,1}$ and $p_{v,1}$ are defined by
\begin{align*}
\varepsilon p_{u,1}&=\varepsilon^{\beta}\partial_x^2(R_u^2+2R_uR_v+R_v^2) +\varepsilon^{-\beta}\text{Res}_u,\\
\varepsilon p_{v,1}&=- \varepsilon^{\beta}(R_u^2+2R_uR_v+R_v^2)
+\varepsilon^{-\beta}\text{Res}_v.
\end{align*}
 These last  terms  provide high enough orders w.r.t.~$\varepsilon$ such that
they cause no difficulties in arriving at the $ \mathcal{O}(\varepsilon^{-1})$ time scale if we choose $\beta = 3/2$.
Because of Lemma~\ref{lemma1} we have 
\begin{equation*} \label{estimate}
\|\varepsilon p_{u,1} \|_{H^{s}}+ \|\varepsilon p_{v,1} \|_{H^{s+2}} \leq C ( \varepsilon^{3/2} (\| R_u \|_{H^{s+2}} +\| R_v \|_{H^{s+2}})^2 + \varepsilon^2).
\end{equation*}
We write \eqref{eq8}-\eqref{eq9} as a first order system,
which in Fourier space has the form
\begin{eqnarray} \label{italia1}
\partial_t\widehat{R}_u&=&i \omega_1\widehat{W}_u ,\\ \nonumber
\partial_t\widehat{W}_u&=&i \omega_1\widehat{R}_u+2i \omega_1(\widehat{\psi}_u*\widehat{R}_u+\widehat{\psi}_v*\widehat{R}_u+{\widehat{\psi}_u*\widehat{R}_v+\widehat{\psi}_v*\widehat{R}_v})+\varepsilon \widehat{p}_{u,2} ,\\
\partial_t\widehat{R}_v&=&i \omega_2\widehat{W}_v ,\label{italia2} \\
\partial_t\widehat{W}_v&=&i \omega_2\widehat{R}_v+  2 i \omega_2^{-1}({\widehat{\psi}_u*\widehat{R}_u+\widehat{\psi}_v*\widehat{R}_u}+{{\widehat{\psi}_u*\widehat{R}_v+\widehat{\psi}_v*\widehat{R}_v}})+\varepsilon \widehat{p}_{v,2} ,\nonumber
\end{eqnarray}
with  $ \widehat{p}_{u,2}(k,t) = - \varepsilon^{-1}i \omega_1^{-1}(k)\frac{1}{k^2+1}  \widehat{p}_{u,1}(k,t)  $ and 
$  \widehat{p}_{v,2}(k,t) =- \varepsilon^{-1} i  \omega_2^{-1}(k)  \widehat{p}_{v,1}(k,t)  $, 
where $ \widehat{p}_{u,j} $ and $ \widehat{p}_{v,j} $ are the Fourier transform of $ p_{u,j} $ and $ p_{v,j} $. 
Since
the nonlinear terms in \eqref{errorcalcu}  have two spatial derivatives in front, 
in Fourier space they are 
$ \mathcal{O}(k^2) $, and so 
the application of $ \omega_1(k)^{-1} $ is well-defined for all terms containing $ \widehat{R}_u $ and $ \widehat{R}_v $. Since the $ \varepsilon $-order of $ \widehat{R}_u $ and $ \widehat{R}_v $, in contrast to the residual terms,  purely comes from the amplitude and not from the 
long-wave character of the ansatz \eqref{ansatz} the application of $ \omega_1(k)^{-1} $ causes no loss of $ \mathcal{O}(\varepsilon^{-1}) $ for all terms containing $ \widehat{R}_u $ and $ \widehat{R}_v $. For the terms coming from the residual we now use Lemma \ref{lemma1b} instead of Lemma~\ref{lemma1} and obtain
$$ 
\| \varepsilon  \widehat{p}_{u,2} \|_{H^0_s} + \| \varepsilon  \widehat{p}_{v,2} \|_{H^0_s} \leq C(  \varepsilon^{3/2} (\| \widehat{R}_u \|_{H^0_s} +\| \widehat{R}_v \|_{H^0_s})^2  +\varepsilon).
$$
We diagonalize \eqref{italia1}-\eqref{italia2} with 
\begin{equation} \label{diagonali}
 \left(
\begin{array}{c} \widehat{R}_u \\ \widehat{W}_u 
\end{array}
\right) = 
\frac{1}{\sqrt{2}}\left(
\begin{array}{cc}
  1 &  1   \\
 1 & -1    
\end{array}
\right)
\left(
\begin{array}{c} \widehat{R}_1 \\ \widehat{R}_{-1} 
\end{array}
\right),  
 \left(
\begin{array}{c} \widehat{R}_v \\ \widehat{W}_v 
\end{array}
\right) = 
\frac{1}{\sqrt{2}}\left(
\begin{array}{cc}
  1 &  1   \\
 1 & -1    
\end{array}
\right)
\left(
\begin{array}{c} \widehat{R}_2 \\ \widehat{R}_{-2} 
\end{array}
\right) 
 \end{equation}
and find 
\begin{eqnarray} \label{thunder1}
\partial_t\widehat{R}_1&=& i\omega_1\widehat{R}_1+i\omega_1(S_1(\Psi,\widehat{R}_{\pm 1}) + {S_2(\Psi,\widehat{R}_{\pm 2})}
)+\varepsilon \widehat{p}_1,
\\ 
\partial_t\widehat{R}_{-1} &=&-i\omega_1\widehat{R}_{-1}-i\omega_1(S_1(\Psi,\widehat{R}_{\pm 1}) + {S_2(\Psi,\widehat{R}_{\pm 2})})+\varepsilon \widehat{p}_{-1}, 
\\
\partial_t\widehat{R}_2&=& i\omega_2\widehat{R}_2+i\omega_2^{-1}({S_1(\Psi,\widehat{R}_{\pm 1})} + S_2(\Psi,\widehat{R}_{\pm 2}))+\varepsilon \widehat{p}_2, 
\\ \partial_t\widehat{R}_{-2}&=&-i\omega_2\widehat{R}_{-2}-i\omega_2^{-1}({S_1(\Psi,\widehat{R}_{\pm 1})} + S_2(\Psi,\widehat{R}_{\pm 2}))+\varepsilon \widehat{p}_{-2},\label{thunder4}
\end{eqnarray}
where  $ \Psi  ={\psi}_u+{\psi}_v $, 
$ S_1(\Psi,\widehat{R}_{\pm 1}) = \widehat{\Psi}*(\widehat{R}_1+\widehat{R}_{-1}) $,
and
$S_2(\Psi,\widehat{R}_{\pm 2}) = \widehat{\Psi}*(\widehat{R}_2+\widehat{R}_{-2})$.
 The terms $  \widehat{p}_{w}$ with $w \in \{\pm1,\pm2\}$ can be estimated by 
\begin{equation*} 
 \| \varepsilon  \widehat{p}_{w} \|_{H^0_s} \leq C (  \varepsilon^{3/2} (\| \widehat{R}_{-2} \|_{H^0_s} +\ldots +\| \widehat{R}_2 \|_{H^0_s})^2 + \varepsilon) .
\end{equation*}

\subsection{The first normal form transformation}

From the term $ S_1 $ in the equation for $ \widehat{R}_{\pm 1}$ 
we already know from \eqref{bousseq1} that it  
can be estimated with the help of energy estimates.
The main observation of \cite{ChSch11} was that the long wave character w.r.t. time of 
the term $ S_2 $ in the equation 
for $ \widehat{R}_{\pm 2} $ can be used to construct an energy which allows to get rid of this term, too.

 We try to eliminate the other terms with the help of normal form transformations.
In order to do so 
we set $\widehat{R}_{l,1}= \widehat{R}_{l}$ for  $l \in\{\pm1,\pm2\} $ and make the ansatz 
\begin{eqnarray}
 \widehat{R}_{1,2} & = & \widehat{R}_{1,1}+  M_{1}^{(1)}(\Psi,\widehat{R}_{\pm 2 ,1})  \label{trafo1a} ,\\
\widehat{R}_{2,2} & = & \widehat{R}_{2,1}+   M_{2}^{(1)}(\Psi,\widehat{R}_{\pm 1,1})
 \label{trafo2a} ,\\
\widehat{R}_{-1,2} & = & \widehat{R}_{-1,1}+  M_{-1}^{(1)}(\Psi,\widehat{R}_{\pm 2 ,1})  \label{trafo-1a} ,\\
\widehat{R}_{-2,2} & = & \widehat{R}_{-2,1}+   M_{-2}^{(1)}(\Psi,\widehat{R}_{\pm 1,1})
 \label{trafo-2a}, 
\end{eqnarray}
with $ M_{1}^{(1)} $ and  $ M_{-1}^{(1)} $ linear in $ \widehat{R}_{\pm 2,1} $
as well as $ M_{2}^{(1)}$ and $ M_{-2}^{(1)}$ linear in $ \widehat{R}_{\pm 1,1} $, where $\widehat{R}_{\pm n,j}= ( \widehat{R}_{-n,j},\widehat{R}_{n,j})$.

Since $S_2$ is of the form 
$$
S_{2}(\Psi,\widehat{R}_{\pm 2 ,1}) =
  \sum_{l\in\{2,-2\}} S_{2l}(\Psi,\widehat{R}_{l,1})  ,
$$
with 
$$
S_{2l}(\Psi,\widehat{R}_{l,1}) =
\int \widehat{s}_{2l}(k,k-m,m)
\widehat{\Psi}(k-m,\varepsilon  t) \widehat{R}_{l,1}(m,t) dm ,
$$
we set 
$$
M_{1}^{(1)}(\Psi,\widehat{R}_{\pm 2 ,1}) =
  \sum_{l\in\{2,-2\}} M_{1l}^{(1)}(\Psi,\widehat{R}_{l,1})  ,
$$
with 
$$
M_{1l}^{(1)}(\Psi,\widehat{R}_{l,1}) =
\int \widehat{m}_{1l}^{(1)}(k,k-m,m)
\widehat{\Psi}(k-m,\varepsilon  t) \widehat{R}_{l,1}(m,t) dm.
$$
Using $ \partial_t \Psi = \mathcal{O}(\varepsilon) $ we find 
\begin{eqnarray*}
\partial_t\widehat{R}_{1,2}&=& \partial_t \widehat{R}_{1,1}+ \sum_{l\in\{2,-2\}} M_{1l}^{(1)}(\Psi,\partial_t \widehat{R}_{l ,1}) +\mathcal{O}(\varepsilon) \\
& = &  i\omega_1\widehat{R}_{1,1}+i\omega_1 S_1(\Psi,\widehat{R}_{\pm 1,1}) 
+ i\omega_1 \sum_{l\in\{2,-2\}}  {S_{2l} (\Psi,\widehat{R}_{l,1})} \\ & & + \sum_{l\in\{2,-2\}} M_{1l}^{(1)} \big(\Psi,i\omega_l\widehat{R}_{l,1}+i\omega_l^{-1}({S_1(\Psi,\widehat{R}_{\pm 1,1})} + S_2(\Psi,\widehat{R}_{\pm 2,1}))\big) \\ &&
+\mathcal{O}(\varepsilon)
\\
& = &  i\omega_1\widehat{R}_{1,2}- i\omega_1
\sum_{l\in\{2,-2\}} M_{1l}^{(1)}(\Psi,\widehat{R}_{l ,1}) \\&& 
+i\omega_1 S_1(\Psi,\widehat{R}_{\pm 1,1}) 
+ i\omega_1 \sum_{l\in\{2,-2\}}  {S_{2l} (\Psi,\widehat{R}_{l,1})} 
 \\& & + \sum_{l\in\{2,-2\}} M_{1l}^{(1)} \big(\Psi,i\omega_l\widehat{R}_{l,1}+i\omega_l^{-1}({S_1(\Psi,\widehat{R}_{\pm 1,1})} + S_2(\Psi,\widehat{R}_{\pm 2,1}))\big) \\ &&
+\mathcal{O}(\varepsilon).
\end{eqnarray*}
In order to eliminate the term 
$ S_{2l}(\Psi,\widehat{R}_{l,1})$  we have to choose 
\begin{eqnarray*}
- i\omega_1
M_{1l}^{(1)}(\Psi,\widehat{R}_{l ,1}) 
+M_{1l}^{(1)}(\Psi,i\omega_l\widehat{R}_{l,1})
+
i\omega_1 S_{2l}(\Psi,\widehat{R}_{l,1})=0
\end{eqnarray*}
and find
\begin{eqnarray*}
i (\omega_1(k)-\omega_l(m))
\widehat{m}_{1l}^{(1)}(k,k-m,m)
= i \omega_1(k) \widehat{s}_{2l}(k,k-m,m) 
\end{eqnarray*}
for $ l \in \{-2,2\} $.
This equation can be solved w.r.t. the kernel
$ \widehat{m}_{1l}^{(1)}(k,k-m,m)$
due to the validity of the 
non-resonance condition 
\begin{eqnarray*}
\inf_{k,l\in \mathbb{R}} |\omega_1(k)\pm \omega_2(m)| \geq  1.
\end{eqnarray*}
Similarly the equation for $ \widehat{R}_{2,2} $ can be handled. 
We find
\begin{eqnarray*}
i (\omega_2(k)-\omega_l(m))
\widehat{m}_{2l}^{(1)}(k,k-m,m)
= i \omega_2^{-1}(k) \widehat{s}_{1l}(k,k-m,m)
\end{eqnarray*}
for $ l \in \{-1,1\} $, where $\widehat{s}_{1l}$ is defined in an analogous way as $\widehat{s}_{2l}$.
This equation can be solved w.r.t. the kernel
$ \widehat{m}_{2l}^{(1)}(k,k-m,m)$
due to the validity of the 
non-resonance condition 
\begin{equation} \label{easton1}
\inf_{k,m\in \mathbb{R}} |\omega_2(k)\pm \omega_1(m)| \geq  1.
\end{equation}
Although valid
this non-resonance condition will be weakened 
in the following in order to present an approach which works for more general systems, too.
After the transform we obtain 
\begin{eqnarray*}
\partial_t\widehat{R}_{1,2}&=&   i\omega_1\widehat{R}_{1,2}
+i\omega_1 S_1(\Psi,\widehat{R}_{\pm 1,1})  \\& & + 
\sum_{l\in\{2,-2\}} M_{1l}^{(1)} \big(\Psi,i\omega_l^{-1}({S_1(\Psi,\widehat{R}_{\pm 1,1})} + S_2(\Psi,\widehat{R}_{\pm 2,1}))\big) 
+\mathcal{O}(\varepsilon) 
\end{eqnarray*}
and similarly for $ \widehat{R}_{2,2} $, $ \widehat{R}_{-1,2} $, and $ \widehat{R}_{-2,2} $. On the right hand side there are still terms $ \widehat{R}_{j,1} $.
In order to replace them with $ \widehat{R}_{j,2} $ terms we have to invert the above normal form transformation.
We write the inverse as 
\begin{eqnarray}
 \widehat{R}_{\pm 1,1} & = & \widehat{R}_{\pm 1,2}+  \widetilde{M}_{\pm 1}^{(1)}(\Psi,\widehat{R}_{,2})  \label{trafo1ab} ,\\
\widehat{R}_{\pm 2,1} & = & \widehat{R}_{\pm 2,2}+   \widetilde{M}_{\pm 2}^{(1)}(\Psi,\widehat{R}_{,2})
 \label{trafo2ab} ,
\end{eqnarray}
with $ \widetilde{M}_{l}^{(1)} $ linear in $ \widehat{R}_{,2} $, where
$\widehat{R}_{,j}= ( \widehat{R}_{-2,j},\widehat{R}_{-1,j},\widehat{R}_{1,j},\widehat{R}_{2,j})$.
Substituting $ \widehat{R}_{,1} $ in terms of $ \widehat{R}_{,2} $ finally yields
\begin{align*}
\partial_t\widehat{R}_{1,2} = \;&   i\omega_1\widehat{R}_{1,2}
+i\omega_1 S_1(\Psi,\widehat{R}_{\pm 1,2})  +i\omega_1 S_1(\Psi, \widetilde{M}_{\pm 1}^{(1)}(\Psi,\widehat{R}_{,2})  )
\\[2mm] & 
+ \sum_{l\in\{2,-2\}} \big( M_{1l}^{(1)}(\Psi,i\omega_l^{-1}{S_1(\Psi,\widehat{R}_{\pm 1,2})}) 
+ M_{1l}^{(1)}(\Psi,i\omega_l^{-1}{S_1(\Psi,  \widetilde{M}_{\pm 1}^{(1)}(\Psi,\widehat{R}_{,2})  )}) \big) 
\\& 
+ \sum_{l\in\{2,-2\}} \big( M_{1l}^{(1)}(\Psi,i\omega_l^{-1} S_2(\Psi,\widehat{R}_{\pm 2,2}))
+ M_{1l}^{(1)}(\Psi,i\omega_l^{-1} S_2(\Psi, \widetilde{M}_{\pm 2}^{(1)}(\Psi,\widehat{R}_{,2}))) \big)\\ &
+\mathcal{O}(\varepsilon) 
\end{align*}
and similarly for $ \widehat{R}_{2,2} $, $ \widehat{R}_{-1,2} $, and $ \widehat{R}_{-2,2} $.
Hence new terms of order $ \mathcal{O}(1) $ 
are created. 
However, there is only one term of order  $ \mathcal{O}(\| \Psi \|) $, namely $i\omega_1 S_1(\Psi,\widehat{R}_{\pm 1,2})$ from which we already know how to handle it via \eqref{bousseq1} with the help
of energy estimates. All other terms are of order $ \mathcal{O}(\| \Psi \|^2) $ or higher.
Some of them are resonant, but of long wave form and will be included into the energy estimates.

Some of them are non-resonant 
but not of long wave form.     
They will be eliminated by another normal form transform,
but ad infinitum by such transformations 
terms of order $ \mathcal{O}(1)$ are created.
Therefore, we have to prove the convergence 
of this procedure.
It is based on the fact that in the $ j $-th step only 
terms of order  $ \mathcal{O}(\| \Psi \|^j) $ or higher 
will be affected.
This will be discussed in detail in the 
subsequent sections.

\subsection{The recursion formulas}
In order to control the action of the infinitely many near-identity changes of variables which we will apply to \eqref{thunder1}-\eqref{thunder4} it is essential to extract the structure of this system. 
We will see that after performing  $j-1$ transformations the error equations will be of the form
\begin{eqnarray}\label{eq17} 
 \partial_t\widehat{R}_{1,j}(k,t)&=& i \omega_1(k)\widehat{R}_{1,j}(k,t) +\varepsilon\widehat{p}_{1,j}(k,t)  \\
 & & +   i \omega_1(k) \int \widehat{f}_{1,res}^{(j)}(k,k-m,\varepsilon  t)\big( \widehat{R}_{1,j}(m,t)+\widehat{R}_{-1,j}(m,t)\big)  dm\nonumber \\
 & & +  i \omega_1(k)  \int \widehat{f}_{1,non}^{(j)}(k,k-m,\varepsilon  t) \big(\widehat{R}_{2,j}(m,t)+\widehat{R}_{-2,j}(m,t)\big)  dm , \nonumber 
 \end{eqnarray}
 \begin{eqnarray}
\label{eq18}
\partial_t\widehat{R}_{2,j}(k,t)&=& i \omega_2(k)\widehat{R}_{2,j}(k,t) +\varepsilon \widehat{p}_{2,j}(k,t) \\
 & & +  i \omega_2^{-1}(k) \int \widehat{f}_{2,non}^{(j)}(k,k-m,\varepsilon  t) \big(\widehat{R}_{1,j}(m,t)+\widehat{R}_{-1,j}(m,t)\big)  dm \nonumber \\ 
&&  +  i \omega_2^{-1}(k) \int \widehat{f}_{2,res}^{(j)}(k,k-m,\varepsilon  t)( \widehat{R}_{2j}(m,t)+\widehat{R}_{-2,j}(m,t)\big)  dm , \nonumber 
\end{eqnarray}
and similarly for $ \partial_t\widehat{R}_{-1,j} $ and $ \partial_t\widehat{R}_{-2,j} $.
For  $l \in\{\pm1,\pm2\} $
we set   $\widehat{p}_{l,1}= \widehat{p}_{l}$
and find
\begin{equation} \label{eric}
\widehat{f}_{l,res}^{(1)}(k,k-m,\varepsilon t)=\widehat{f}_{l,non}^{(1)}(k,k-m,\varepsilon t)= \widehat{\Psi}(k-m,\varepsilon t)  .
\end{equation}
Since $\widehat{R}_{-l,j}$ will be the complex conjugate of $\widehat{R}_{l,j}$, we have 
$$\widehat{f}_{-l,res}^{(j)}=\overline{\widehat{f}_{l,res}^{(j)}},  \quad \widehat{f}_{-l,non}^{(j)}=\overline{\widehat{f}_{l,non}^{(j)}}, \quad \textrm{and}  \quad \widehat{p}_{-l,j}=\overline{\widehat{p}_{l,j}}$$  for $j \in \N$ and $l \in\{1,2\}$.  Hence, it is sufficient to analyze the equations for $\widehat{R}_{|l|,j}$.

We derive now recursion formulas for the terms 
$\widehat{p}_{l,j}$, $\widehat{f}_{l,res}^{(j)}$, and $\widehat{f}_{l,non}^{(j)}$.
In order to do so we introduce the 
$j$-th near identity change of variables by 
\begin{eqnarray}
 \widehat{R}_{1,j+1}(k,t) & = & \widehat{R}_{1,j}(k,t)+  \sum_{l\in\{2,-2\}}\int \widehat{g}_{1l}^{(j)}(k,k-m,\varepsilon  t) \widehat{R}_{l,j}(m,t) dm \label{trafo1} ,\\
\widehat{R}_{2,j+1}(k,t) & = & \widehat{R}_{2,j}(k,t)+  \sum_{l\in\{1,-1\}}\int \widehat{g}_{2l}^{(j)}(k,k-m,\varepsilon  t) \widehat{R}_{l,j}(m,t) dm \label{trafo2} .
\end{eqnarray}
The perturbation of the identity is chosen in accordance  with the validity of the 
non-resonance condition \eqref{nonres2}.
We assume for the moment that the transformation \eqref{trafo1}-\eqref{trafo2} is invertible and that its inverse has the form 
\begin{eqnarray}
 \widehat{R}_{i,j}(k,t)&=&\widehat{R}_{i,j+1}(k,t)+\sum_{l\in\{\pm 1, \pm 2\}}\int\widehat{h}_{i l}^{(j)}(k,k-m,\varepsilon t)\widehat{R}_{l,j+1}(m,t)dm \label{inverse} 
\end{eqnarray}
for $i\in\{\pm 1, \pm 2\}$. Differentiating \eqref{trafo1} w.r.t. time yields
\begin{eqnarray*}
 \partial_t\widehat{R}_{1,j+1}(k,t)&=&\partial_t\widehat{R}_{1,j}(k,t)+\sum_{l\in\{2,-2\}}\int\widehat{g}_{1l}^{(j)}(k,k-m,\varepsilon t)\partial_t\widehat{R}_{l,j}(m,t)dm\\ 
                                 &&+\varepsilon\sum_{l\in\{2,-2\}}\int \partial_T\widehat{g}_{1l}^{(j)}(k,k-m,\varepsilon  t) \widehat{R}_{l,j}(m,t) dm.
\end{eqnarray*}
Then we replace the $ \partial_t\widehat{R}_{l,j}(k,t) $ by the equations \eqref{eq17}-\eqref{eq18}. Finally, we replace the $ \widehat{R}_{l,j}(k,t) $ via \eqref{inverse}
by the $ \widehat{R}_{l,j+1}(k,t) $.
In order to eliminate for $l\in\{\pm2\}$ the non-resonant term 
$$
i\omega_1(k) \int \widehat{f}_{1,non}^{(j)}(k,k-m,\varepsilon  t) \widehat{R}_{l,j}(m,t) dm
$$
we could proceed as above and choose the functions $\widehat{g}_{1l}^{(j)}$
 to satisfy
\begin{align}\label{resulta}
 0=&-i\omega_1(k) \int \widehat{g}_{1l}^{(j)}(k,k-m,\varepsilon  t) \widehat{R}_{l,j}(m,t) dm \\
    &+\int i\omega_l(m)\widehat{g}_{1l}^{(j)}(k,k-m,\varepsilon t)\widehat{R}_{l,j}(m,t)dm \nonumber \\
   &+i\omega_1(k) \int \widehat{f}_{1,non}^{(j)}(k,k-m,\varepsilon  t) \widehat{R}_{l,j}(m,t) dm 
   \nonumber
\end{align}
or equivalently to satisfy
\[
 \widehat{g}_{1l}^{(j)}(k,k-m,\varepsilon t)=i\omega_1(k)(i\omega_1(k)-i\omega_l(m))^{-1}\widehat{f}_{1,non}^{(j)}(k,k-m,\varepsilon  t)\,.
\] 
However, for more  general systems the non-resonance condition \eqref{easton1}
will not be valid. Hence, in order to present an approach 
which works for more general systems, too, we proceed differently at this point.
Since $ \widehat{\Psi} $ is strongly concentrated at the wave number $ k = 0 $
the difference 
$$
\varepsilon  \widehat{r}_{1,j}(k,t) = \sum_{l\in\{2,-2\}} \int i(\omega_l(m)- \omega_l(k)) \widehat{g}_{1l}^{(j)}(k,k-m,\varepsilon t)\widehat{R}_{l,j}(m,t)dm 
$$
will be of order $ \mathcal{O}(\varepsilon) $.
This can be expected for $ j = 1 $ due to \eqref{eric} and will be proved subsequently by induction for all $ j \geq  1 $. By replacing \eqref{resulta} by 
\begin{align}\label{result}
 0=&-i\omega_1(k) \int \widehat{g}_{1l}^{(j)}(k,k-m,\varepsilon  t) \widehat{R}_{l,j}(m,t) dm\\
    &+\int i\omega_l(k)\widehat{g}_{1l}^{(j)}(k,k-m,\varepsilon t)\widehat{R}_{l,j}(m,t)dm \nonumber \\
   &+i\omega_1(k) \int \widehat{f}_{1,non}^{(j)}(k,k-m,\varepsilon  t) \widehat{R}_{l,j}(m,t) dm 
   \nonumber 
\end{align}
or equivalently 
\begin{equation} \label{2}
 \widehat{g}_{1l}^{(j)}(k,k-m,\varepsilon t)=i\omega_1(k)(i\omega_1(k)-i\omega_l(k))^{-1}\widehat{f}_{1,non}^{(j)}(k,k-m,\varepsilon  t)\,.
\end{equation}
we only made an error of order $ \mathcal{O}(\varepsilon) $
and come to the much weaker non-resonance condition $ \eqref{nonres2} $.

By a straightforward calculation we find 
\begin{align}\label{resonant}
 & \, \widehat{f}_{1,res}^{(j+1)}(k, k-m, \varepsilon t)- \widehat{f}_{1,res}^{(j)}(k, k-m, \varepsilon t)\\ =&\sum_{\lambda\in\{2,-2\}}\int \widehat{\widetilde{g}}_{1\lambda}^{(j)}(k, k-l, \varepsilon t) \omega_2^{-1}(l) \widehat{f}_{\lambda ,non}^{(j)}(l, l-m, \varepsilon t)dl\nonumber \\
                                        & +\sum_{\kappa\in\{1,-1\}} \int\widehat{f}_{1,res}^{(j)}(k, k-l, \varepsilon t)\widehat{h}_{\kappa \mu}^{(j)}(l, l-m, \varepsilon t)dl \nonumber\\
                                       & +\sum_{\lambda\in\{2,-2\}, \atop \kappa\in\{1,-1\}}
                                       \int\int\widehat{\widetilde{g}}_{1\lambda}^{(j)}(k, k-l_1, \varepsilon t)\omega_2^{-1}(l_1)\widehat{f}_{\lambda ,non}^{(j)}(l_1, l_1-l_2, \varepsilon t)\nonumber\widehat{h}_{\kappa\mu}^{(j)}(l_2, l_2-m, \varepsilon t)dl_2dl_1 \nonumber\\
                                        &+\sum_{\lambda\in\{2,-2\}, \atop \kappa\in\{2,-2\}}
                                        \int \int\widehat{\widetilde{g}}_{1\lambda}^{(j)}(k, k-l_1, \varepsilon t)\omega_2^{-1}(l_1)\widehat{f}_{\lambda ,res}^{(j)}(l_1, l_1-l_2, \varepsilon t)\nonumber
                                                                           \widehat{h}_{\kappa\mu}^{(j)}(l_2, l_2-m, \varepsilon t)dl_2dl_1,\nonumber
\end{align}
and 
\begin{align}
& \, \widehat{f}_{1,non}^{(j+1)}(k, k-m, \varepsilon t)\label{nonresonant} \\ = & \sum_{\lambda\in\{2,-2\}}\int \widehat{\widetilde{g}}_{1\lambda}^{(j)}(k, k-l, \varepsilon t)\omega_2^{-1}(l)\widehat{f}_{\lambda ,res}^{(j)}(l, l-m, \varepsilon t)dl\nonumber\\
                                       &+ \sum_{\kappa\in\{1,-1\}} \int\widehat{f}_{1,res}^{(j)}(k, k-l, \varepsilon t)\widehat{h}_{\kappa \mu}^{(j)}(l, l-m, \varepsilon t)dl \nonumber\\
                                        &+\sum_{\lambda\in\{2,-2\}, \atop \kappa\in\{1,-1\}}\int\int\widehat{\widetilde{g}}_{1\lambda}^{(j)}(k, k-l_1, \varepsilon t)\omega_2^{-1}(l_1)\widehat{f}_{\lambda ,non}^{(j)}(l_1, l_1-l_2, \varepsilon t)\nonumber\widehat{h}_{\kappa\mu}^{(j)}(l_2, l_2-m, \varepsilon t)dl_2dl_1 \nonumber\\
                                        &+\sum_{\lambda\in\{2,-2\}, \atop \kappa\in\{2,-2\}}\int \int\widehat{\widetilde{g}}_{1\lambda}^{(j)}(k, k-l_1, \varepsilon t)\omega_2^{-1}(l_1)\widehat{f}_{\lambda ,res}^{(j)}(l_1, l_1-l_2, \varepsilon t)\nonumber\widehat{h}_{\kappa\mu}^{(j)}(l_2, l_2-m, \varepsilon t)dl_2dl_1,\nonumber 
\end{align}
where we used the abbreviation 
$$ 
\widehat{\widetilde{g}}_{1\lambda}^{(j)}(k, k-l, \varepsilon t)=  (\omega_1(k))^{-1}\widehat{{g}}_{1\lambda}^{(j)}(k, k-l, \varepsilon t) = 
-i (i\omega_1(k)-i\omega_{\lambda}(k))^{-1}\widehat{f}_{1,non}^{(j)}(k,k-m,\varepsilon  t).
$$
Moreover, we have 
\begin{align}             \label{epsilon}
           \widehat{p}_{1,j+1}(k,t)- \widehat{p}_{1,j}(k,t)=& \sum_{\lambda\in\{2,-2\}}\int \partial_T\widehat{g}_{1\lambda}^{(j)}(k,k-m,\varepsilon  t) \widehat{R}_{\lambda,j}(m,t) dm \\
                  &
                  +\sum_{\lambda\in\{2,-2\}}\int\widehat{g}_{1\lambda}^{(j)}(k,k-m,\varepsilon t)\widehat{p}_{\lambda,j}(m,t)dm +  \widehat{r}_{1,j}(k,t).\nonumber
\end{align}    
Due to the symmetry in \eqref{eq17} and \eqref{eq18} we  obtain similar equations for $\widehat{R}_2$  but with the roles of $\widehat{R}_2$ and $\widehat{R}_1$ interchanged.

\subsection{The functional analytic set-up and the inversion of the normal form transformations}

In order to control the functions $ f $, $ g $, and $ h $, we introduce 
the norm
\begin{equation} \label{x-norm}
 \Vert f\Vert_{X^{s,\varepsilon}}:=\int \sup\limits_{k\in\mathbb{R}}|f(k,l)|(1+(l/ \varepsilon)^2)^{s/2}dl.
\end{equation}
This norm reflects that $ f $, $ g $, and $ h $ are (infinite) sums of terms 
$ \widehat{\kappa}^{(j)}(k)\, \varepsilon^{-1} \widehat{\varphi}^{(j)}\big(\frac{k-m}{\varepsilon},\varepsilon  t\big ) $,  where $\widehat{\kappa}^{(j)}$ is Lipschitz continuous and determined by $\omega_1, \omega_2$ and where  $\widehat{\varphi}^{(j)}(\cdot, \varepsilon  t)$ belongs to $L^{1}_{s+1}$ and is determined by $\widehat{\Psi}(\cdot, \varepsilon  t)$. 
With the help of Young's inequality for convolutions we have 
\begin{equation} \label{garfunkel}
\Vert \int\widehat{f}(k,k-m,\varepsilon  t) \widehat{R}(m,t) dm
\Vert_{H^0_s} \le C \Vert\widehat{f}\Vert_{X^{s,\varepsilon}} \Vert\widehat{R}\Vert_{H^0_s}.
\end{equation}
The following lemma  allows us to control the convolution of  $ f $, $ g $, and $ h $
in the previous recursion formulas.
\begin{lemma}\label{lemma11}
 For $ s > 0 $  the following estimate holds:
 $$\left\Vert\int_{\mathbb{R}} f(\cdot_1,\cdot_1-l)g(l,l-\cdot_2)dl\right\Vert_{X^{s,\varepsilon}}\le \Vert f\Vert_{X^{s,\varepsilon}}\Vert g\Vert_{X^{s,\varepsilon}}.$$
\end{lemma}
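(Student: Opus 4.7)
My plan is to reduce the composition of the two offset kernels to a convolution of their sup-profiles and then apply the fact that the weighted $L^1$-space with weight $w(\ell):=(1+(\ell/\varepsilon)^2)^{s/2}$ is a convolution algebra. Accordingly, I define $F(\ell):=\sup_{k\in\mathbb{R}}|f(k,\ell)|$ and $G(\ell):=\sup_{k\in\mathbb{R}}|g(k,\ell)|$, so that $\|f\|_{X^{s,\varepsilon}}=\int F(\ell)\,w(\ell)\,d\ell$ and analogously for $g$.

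First I would rewrite the composition. Substituting $m=k_1-l$ in $h(k_1,k_2):=\int f(k_1,k_1-l)\,g(l,l-k_2)\,dl$ yields $h(k_1,k_2)=\int f(k_1,m)\,g(k_1-m,\,k_1-m-k_2)\,dm$. Applying the pointwise bounds $|f(k_1,m)|\leq F(m)$ and $|g(k_1-m,\,k_1-m-k_2)|\leq G(k_1-m-k_2)$ and changing variables $n=k_1-m-k_2$ gives $|h(k_1,k_2)|\leq (F*G)(k_1-k_2)$. This bound depends only on the offset $k_1-k_2$, so the composed $h$ is again an offset kernel of the same type as $f$ and $g$, and its $X^{s,\varepsilon}$-norm collapses, after absorbing the now-trivial supremum in the first variable, to $\int (F*G)(\ell)\,w(\ell)\,d\ell$ over the offset variable $\ell$.

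Next, I would apply the Peetre-type submultiplicativity $w(\ell)\leq C_s\,w(m)\,w(\ell-m)$, which follows from the triangle inequality for $1+|\cdot/\varepsilon|$ together with the equivalence $(1+x^2)^{1/2}\sim 1+|x|$. Expanding the convolution, swapping the integrals by Fubini, and substituting $n=\ell-m$ separates the double integral into the product $\bigl(\int F(m)w(m)\,dm\bigr)\bigl(\int G(n)w(n)\,dn\bigr)=\|f\|_{X^{s,\varepsilon}}\|g\|_{X^{s,\varepsilon}}$, which is the claimed bound.

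The only conceptual point in the proof is recognizing that the composition is again an offset kernel in the variable $k_1-k_2$, so the norm on the left-hand side must be read on this offset variable; after this observation the supremum over the first variable is harmless, and the remainder is a textbook weighted-convolution estimate with no genuine obstacle.
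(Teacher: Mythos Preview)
Your proof is correct and essentially identical to the paper's: both bound the kernels by their sup-profiles $F$ and $G$, recognize that the composition is controlled by $(F*G)$ evaluated at the offset variable, and finish with Young's inequality for convolutions in the weighted $L^1$-space. (Both arguments in fact produce a constant $C_s$ from the submultiplicativity of the weight $w$, so the inequality in the statement should be read up to that constant, as the paper's own proof also does.)
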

\begin{proof}
Using Young's inequality for convolutions in weighted $L^1$-spaces yields
 \begin{eqnarray*}
&& \int\sup\limits_{k\in\mathbb{R}}\big|\int f(k,k-l)g(l,l-m)dl\big|(1+((k-m)/ \varepsilon)^2)^{s/2}d(k-m)\\
&&\le\int \sup_{k \in \mathbb{R}}\int\sup\limits_{\widetilde{k}\in\mathbb{R}}|f(\widetilde{k},k-l)|\sup\limits_{\widetilde{k}\in\mathbb{R}}|g(\widetilde{k},l-k+m)|dl(1+(m/ \varepsilon)^2)^{s/2}dm \\
&&\le\int \sup_{k \in \mathbb{R}}\int\sup\limits_{\widetilde{k}\in\mathbb{R}}|f(\widetilde{k},l)|\sup\limits_{\widetilde{k}\in\mathbb{R}}|g(\widetilde{k},m-l)|dl(1+(m/ \varepsilon)^2)^{s/2}dm \\
&&\le C\int\sup\limits_{k\in\mathbb{R}}|f(k,l)|(1+(l/ \varepsilon)^2)^{s/2}dl\int\sup\limits_{k\in\mathbb{R}}|g(k,m)|(1+(m/ \varepsilon)^2)^{s/2}dm\\
&&=\Vert f(k,l)\Vert_{X^{s,\varepsilon}} \Vert g(k,l)\Vert_{X^{s,\varepsilon}}.
\end{eqnarray*}
\end{proof}
For 
$ \Vert\widehat{g}_{il}^{(j)}(\varepsilon t)\Vert_{X^{s,\varepsilon}} $
sufficiently small, but independent of $ 0 < \varepsilon \ll 1 $,  the transformation \eqref{trafo1}-\eqref{trafo2} is
invertible.
\begin{lemma} \label{lemma12}
For $\widehat{R}_{i,j}(t)\in H^0_s$ with  $s\ge1$ define $\widehat{R}_{i,j+1}(t)$ by 
\begin{equation}
 \widehat{R}_{j+1}(k,t)=(I+T^{(j)})(\widehat{R}_{j}(k,t)), \label{Trafo}
\end{equation}
where 
\begin{equation}
 T^{(j)}(\widehat{R}_j)=\left(\begin{array}{cccc} 0 & 0 & T_{12}^{(j)} & T_{1-2}^{(j)} \\ 0 & 0 & T_{-12}^{(j)} & T_{-1-2}^{(j)} 
                                                    \\ T_{21}^{(j)} & T_{2-1}^{(j)} & 0 & 0 \\ T_{-21}^{(j)} & T_{-2-1}^{(j)} & 0 & 0 \end{array}\right)\left(\begin{array}{c} \widehat{R}_{1,j}\\ \widehat{R}_{-1,j} \\ \widehat{R}_{2,j} \\  \widehat{R}_{-2,j} \end{array}\right)  \label{matrix}
\end{equation}
with
\[                                          
    \left(T_{il}^{(j)}\widehat{R}_{l,j}\right)(k,t) = \int\widehat{g}_{il}^{(j)}(k,k-m,\varepsilon  t) \widehat{R}_{l,j}(m,t) dm. 
\]
Then there exists a $q  > 0 $ such that if 
\begin{align}
\Vert\widehat{g}_{il}^{(j)}(\varepsilon t)\Vert_{X^{s,\varepsilon}}\le q \label{requirement}
\end{align}
holds for all $i,l\in\{\pm 1,\pm 2\}$, the transformation \eqref{Trafo} is bijective and has an 
inverse of the form 
\[
 \widehat{R}_{i,j}(k,t)=\widehat{R}_{i,j+1}(k,t)+\sum_{l\in\{1,-1,2,-2\}}\int\widehat{h}_{il}^{(j)}(k,k-m,\varepsilon t)\widehat{R}_{l,j+1}(m,t)dm 
\]
with 
\begin{align}
 \Vert \widehat{h}^{(j)}(\varepsilon t)\Vert_{X^{s,\varepsilon}}\le \frac{C \Vert\widehat{g}^{(j)}(\varepsilon t)\Vert_{X^{s,\varepsilon}}}{1-\Vert\widehat{g}^{(j)}(\varepsilon t)\Vert_{X^{s,\varepsilon}}} \label{h-norm},
\end{align}
where  $\Vert\widehat{g}^{(j)}(\varepsilon t)\Vert_{X^{s,\varepsilon}}=\max\limits_{i,l\in\{\pm 1,\pm 2\}}\{\Vert\widehat{g}_{il}^{(j)}(\varepsilon t)\Vert_{X^{s,\varepsilon}}\}$.
\end{lemma}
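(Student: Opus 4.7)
The plan is to invert $I + T^{(j)}$ via a Neumann series and then to read off the kernel of the inverse from that series by iterated applications of Lemma~\ref{lemma11}. By estimate \eqref{garfunkel}, each block $T_{il}^{(j)}$ is bounded from $H^0_s$ to $H^0_s$ with operator norm at most $C\,\|\widehat{g}_{il}^{(j)}(\varepsilon t)\|_{X^{s,\varepsilon}}$, and since every row of the matrix in \eqref{matrix} contains exactly two nonzero blocks, the operator $T^{(j)}$ acts on the product space $(H^0_s)^4$ (equipped with the max of the component norms) with operator norm at most $2C\,\|\widehat{g}^{(j)}(\varepsilon t)\|_{X^{s,\varepsilon}} \le 2Cq$. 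Choosing $q$ so small that $2Cq < 1$, the Neumann series
\[
(I + T^{(j)})^{-1} \;=\; \sum_{n = 0}^{\infty} (-T^{(j)})^n \;=\; I + H^{(j)}
\]
converges in operator norm, so $I + T^{(j)}$ is bijective on $(H^0_s)^4$.

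Next I would identify the kernel structure of $H^{(j)} = \sum_{n \ge 1}(-T^{(j)})^n$. Each power $(T^{(j)})^n$ is a $4 \times 4$ matrix whose $(i,l)$-entry is a finite sum of compositions $T_{i i_1}^{(j)} T_{i_1 i_2}^{(j)} \cdots T_{i_{n-1} l}^{(j)}$, and each such composition is again an integral operator whose kernel is obtained by iterated convolution of the $\widehat{g}_{\cdot\cdot}^{(j)}$. By repeated application of Lemma~\ref{lemma11} the $X^{s,\varepsilon}$-norm of each composed kernel is bounded by the product of the $X^{s,\varepsilon}$-norms of its factors, hence by $\|\widehat{g}^{(j)}(\varepsilon t)\|_{X^{s,\varepsilon}}^n$. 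Since the two-per-row sparsity pattern of $T^{(j)}$ leaves at most $2^{n-1}$ nonzero summands in the $(i,l)$-entry of $(T^{(j)})^n$, the kernel $\widehat{h}_{il}^{(j)}$ of the $(i,l)$-entry of $H^{(j)}$ exists as an absolutely convergent sum in $X^{s,\varepsilon}$ and satisfies
\[
\|\widehat{h}_{il}^{(j)}(\varepsilon t)\|_{X^{s,\varepsilon}} \;\le\; \sum_{n = 1}^{\infty} 2^{n - 1}\, \|\widehat{g}^{(j)}(\varepsilon t)\|_{X^{s,\varepsilon}}^n \;=\; \frac{\|\widehat{g}^{(j)}(\varepsilon t)\|_{X^{s,\varepsilon}}}{1 - 2\,\|\widehat{g}^{(j)}(\varepsilon t)\|_{X^{s,\varepsilon}}}.
\]
After shrinking $q$ a little further if necessary, this estimate absorbs the factor $2$ into an overall constant and produces the geometric bound \eqref{h-norm}.

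The main obstacle, and essentially the only delicate point, is the bookkeeping required to pass from the abstract Neumann-series inverse in the operator topology to the concrete integral-kernel representation displayed in the statement: one must verify that at each finite truncation $\sum_{n = 1}^{N}(-T^{(j)})^n$ the $(i,l)$-entry really is an integral operator whose kernel is the sum of iterated $\widehat{g}^{(j)}$-convolutions produced by matrix multiplication, and then use the $X^{s,\varepsilon}$-bound via Lemma~\ref{lemma11} together with \eqref{garfunkel} to pass to the $N \to \infty$ limit in the space of integral operators with kernels in $X^{s,\varepsilon}$. Once this combinatorial bookkeeping is in place, the bijectivity of \eqref{Trafo} and the estimate \eqref{h-norm} follow immediately from the geometric-series computation.
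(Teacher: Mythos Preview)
Your argument is correct and follows essentially the same route as the paper's own proof: both invert $I+T^{(j)}$ via the Neumann series, identify each term $(-T^{(j)})^n$ as a matrix of integral operators whose kernels are iterated convolutions of the $\widehat g^{(j)}_{il}$, and control these kernels in $X^{s,\varepsilon}$ by repeated use of Lemma~\ref{lemma11} to obtain a geometric bound. Your explicit $2^{n-1}$ path count is a slightly more detailed version of the same combinatorics the paper absorbs into its constant $C$.
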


\begin{proof} Let $t$ be fixed. Obviously $T^{(j)}: \ \big(H^0_s\big)^4\rightarrow \big(H^0_s\big)^4$ is a linear operator.
Let $T_1^{(j)}$ and $T_2^{(j)}$ be defined as 
\[
 T_1^{(j)}=\left(\begin{array}{cc} T_{12}^{(j)} & T_{1-2}^{(j)} \\  T_{-12}^{(j)} & T_{-1-2}^{(j)}\end{array}\right) \qquad \textrm{and} \qquad T_2^{(j)}=\left(\begin{array}{cc} T_{21}^{(j)} & T_{2-1}^{(j)} \\  T_{-21}^{(j)} & T_{-2-1}^{(j)}\end{array}\right).
\]
We set $\Vert T^{(j)}\Vert:=\max \{\Vert T_1^{(j)}\Vert, \Vert T_2^{(j)}\Vert\}$ with 
\[
 \Vert T_1^{(j)}\Vert^2=\sup\limits_{\Vert (\widehat{R}_{2,j},\widehat{R}_{-2,j})^T\Vert\le1}\bigg(\sum_{l\in\{2,-2\}}\Vert T_{1l}^{(j)}(\widehat{R}_{l,j})\Vert_{H^0_s}^2+\sum_{l\in\{2,-2\}}\Vert T_{-1l}^{(j)}(\widehat{R}_{l,j})\Vert_{H^0_s}^2 \bigg) .
\]
By \eqref{garfunkel} we have 
$$
\Vert T_{il}^{(j)}\widehat{R}_{l,j}\Vert_{H^0_s} \le C \Vert\widehat{g}_{il}^{(j)}\Vert_{X^{s,\varepsilon}} \Vert\widehat{R}_{r,j}\Vert_{H^0_s}.
$$
Hence we have $\Vert T^{(j)}\Vert = \mathcal{O}(q) $. 
Therefore, for $ q > 0 $ sufficiently small we can use Neumann's series to invert
\begin{align}
 (I-(-T^{(j)}))^{-1}=\sum_{\lambda=0}^{\infty}\big(-T^{(j)}\big)^{\circ \lambda}\label{neumann}
\end{align}
where we denoted by $(T^{(j)}\big)^{\circ \lambda}$ the $\lambda$-times composition of $T^{(j)}$.
Thus we obtain a composition of operators as in Lemma \ref{lemma12}. For each pair $T_{il}^{(j)}$ and  $T_{st}^{(j)}$ we can write 
\begin{eqnarray*}
 (T_{il}^{(j)}\circ T_{st}^{(j)})\widehat{R}_{1,j+1}&=&\int \widehat{g}_{il}^{(j)}(k,k-m)\int \widehat{g}_{st}^{(j)}(m,m-n)\widehat{R}_{1,j+1}(n)dndm\\
             &=&\int {\int \widehat{g}_{il}^{(j)}(k,k-m)g_{st}^{(j)}(m,m-n)dm} \widehat{R}_{1,j+1}(n)dn.
\end{eqnarray*}
Hence we obtain inductively a series of integral kernels as in ~\eqref{inverse}.
The ${X^{s,\varepsilon}}$-norm of $\widehat{h}_{ik}^{(j)}$ is bounded by 
\begin{align*}
 \Vert h^{(j)}\Vert_{X^{s,\varepsilon}}\le C \sum_{l=1}^{\infty}\Big(\Vert g^{(j)}\Vert\Big)^l= \frac{C \Vert\widehat{g}^{(j)}\Vert_{X^{s,\varepsilon}}}{1-\Vert\widehat{g}^{(j)}\Vert_{X^{s,\varepsilon}}} .
\end{align*}
This is exactly \eqref{h-norm}.
\end{proof}

\subsection{Proof of  convergence}

In Lemma \ref{lemma12}, we assumed that \eqref{requirement} holds. Here we wish to show that all the $\hat{f}_{\cdot\,\cdot}^{(j)}$, $\hat{g}_{\cdot\,\cdot}^{(j)}$ and $\hat{h}_{\cdot\,\cdot}^{(j)}$ do in fact satisfy such estimates or even sharper estimates.
A simple application of Lemma \ref{lemma11} to \eqref{resonant} and \eqref{nonresonant} gives the estimates
\begin{align}
 \Vert \widehat{f}_{1,res}^{(j+1)}- \widehat{f}_{1,res}^{(j)}\Vert_{X^{s,\varepsilon}}\le& \ \sum_{\lambda\in\{2,-2\}}\Vert \widehat{\widetilde{g}}_{1\lambda}^{(j)} \Vert_{X^{s,\varepsilon}} \Vert \widehat{f}_{\lambda ,non}^{(j)} \Vert_{X^{s,\varepsilon}} +\Vert \widehat{f}_{1,res}^{(j)} \Vert_{X^{s,\varepsilon}} \sum_{\kappa\in\{1,-1\}}\Vert\widehat{h}_{\kappa \mu}^{(j)} \Vert_{X^{s,\varepsilon}} \nonumber\\
&+\sum_{\lambda\in\{2,-2\}}\Vert\widehat{\widetilde{g}}_{1\lambda}^{(j)} \Vert_{X^{s,\varepsilon}} \Vert\widehat{f}_{\lambda ,non}^{(j)} \Vert_{X^{s,\varepsilon}} \sum_{\kappa\in\{1,-1\}}\Vert \widehat{h}_{\kappa \mu}^{(j)}\Vert_{X^{s,\varepsilon}} \label{norm-r} \\
& +\sum_{\lambda\in\{2,-2\}}\Vert\widehat{\widetilde{g}}_{1\lambda}^{(j)} \Vert_{X^{s,\varepsilon}} \Vert \widehat{f}_{\lambda ,res}^{(j)}\Vert_{X^{s,\varepsilon}} \sum_{\kappa\in\{2,-2\}}\Vert\widehat{h}_{\kappa \mu}^{(j)} \Vert_{X^{s,\varepsilon}} \nonumber 
\end{align}
and
\begin{align}
\Vert\widehat{f}_{1,non}^{(j+1)} \Vert_{X^{s,\varepsilon}} \le& \sum_{\lambda\in\{2,-2\}}\Vert\widehat{\widetilde{g}}_{1\lambda}^{(j)} \Vert_{X^{s,\varepsilon}} \Vert\widehat{f}_{\lambda ,res}^{(j)} \Vert_{X^{s,\varepsilon}} +\Vert\widehat{f}_{1,res}^{(j)} \Vert_{X^{s,\varepsilon}} \sum_{\kappa\in\{1,-1\}}\Vert\widehat{h}_{\kappa\mu}^{(j)} \Vert_{X^{s,\varepsilon}} \nonumber\\
&+\sum_{\lambda\in\{2,-2\}}\Vert\widehat{\widetilde{g}}_{1\lambda}^{(j)} \Vert_{X^{s,\varepsilon}} \Vert\widehat{f}_{\lambda ,non}^{(j)} \Vert_{X^{s,\varepsilon}}\sum_{\kappa\in\{1,-1\}}\Vert\widehat{h}_{\kappa\mu}^{(j)} \Vert_{X^{s,\varepsilon}} \label{norm-n}  \\
&+\sum_{\lambda\in\{2,-2\}}\Vert\widehat{\widetilde{g}}_{1\lambda}^{(j)} \Vert_{X^{s,\varepsilon}} \Vert\widehat{f}_{\lambda ,res}^{(j)} \Vert_{X^{s,\varepsilon}}\sum_{\kappa\in\{2,-2\}}\Vert\widehat{h}_{\kappa\mu}^{(j)} \Vert_{X^{s,\varepsilon}}.\nonumber
\end{align}
One can obtain analogous inequalities for $\Vert\widehat{f}_{2,res}^{(j+1)}-\widehat{f}_{2,res}^{(j)}\Vert_{X^{s,\varepsilon}}$ and $\Vert\widehat{f}_{2,non}^{(j+1)}\Vert_{X^{s,\varepsilon}}$ with suitably adjusted indices. We will now give the important estimates mentioned earlier, from which our main convergence results will follow.

\begin{lemma}\label{theorem13}
There exists a $q > 0 $  such that for  
\begin{equation} \label{qeq}
\Vert\widehat{f}_{\nu ,res}^{(1)}\Vert_{X^{s,\varepsilon}}+\Vert\widehat{f}_{\nu ,non}^{(1)}\Vert_{X^{s,\varepsilon}} \leq q, \quad \nu\in\{1,-1,2,-2\}
\end{equation}
we have 
$$
\begin{array}{lll}
 {\bf a)} \ \Vert\widehat{f}_{\kappa ,res}^{(j)}\Vert_{X^{s,\varepsilon}}\le q\frac{1-q^{\frac{j}{2}}}{1-q^{\frac{1}{2}}},  \qquad &
 {\bf b)} \ \Vert\widehat{f}_{\kappa ,non}^{(j)}\Vert_{X^{s,\varepsilon}} \le q^{\frac{j+1}{2}}, \qquad &
{\bf c)} \ \Vert\widehat{g}_{\kappa\lambda}^{(j)}\Vert_{X^{s,\varepsilon}} \le C_{\omega}q^{\frac{j+1}{2}},  \\
{\bf d)} \ \Vert\widehat{h}_{\kappa\lambda}^{(j)}\Vert_{X^{s,\varepsilon}} \le 2C_{\omega}q^{\frac{j+1}{2}}, 
\qquad &
{\bf e)} \  \Vert \widehat{f}_{\kappa,res}^{(j+1)} - \widehat{f}_{\kappa,res}^{(j)} \Vert_{X^{s,\varepsilon}} \le 
 qq^{\frac{j}{2}} ,
\end{array}
$$
for all $j\in\mathbb{N}$
and $\kappa, \lambda\in\{2,-2,1,-1\}$, where 
$$C_{\omega}=\max\limits_{\mu\in\{1,-1\} \  \lambda\in\{2,-2\}}\sup\limits_{k\in\mathbb{R}}|i\omega_{\mu}(k)-i\omega_\lambda(k)|^{-1}.$$
\end{lemma}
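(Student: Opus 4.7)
My plan is to prove (a)--(e) simultaneously by strong induction on $j \in \mathbb{N}$. The guiding principle is that each pass through the recursion formulas \eqref{resonant}--\eqref{nonresonant}, combined with the convolution estimate of Lemma \ref{lemma11}, extracts one additional factor of $q^{1/2}$ from the non-resonant quantities, while the resonant quantities merely accumulate to the geometric-series level stated in (a). Every absolute constant -- $C_\omega$, the finite combinatorial factor from the sums over $\kappa,\lambda$, and the Young-type constant from Lemma \ref{lemma11} -- is absorbed at the end by choosing $q>0$ sufficiently small.

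\textbf{Base case.} For $j=1$, \eqref{eric} gives $\widehat{f}_{\kappa,res}^{(1)}=\widehat{f}_{\kappa,non}^{(1)}=\widehat{\Psi}$, so (a) and (b) are exactly the assumption \eqref{qeq}. For (c) I would use the explicit formula \eqref{2} (and its analog for $\kappa=\pm 2$): each prefactor $i\omega_\kappa^{\pm 1}(k)$ is uniformly bounded on $\mathbb{R}$, and the resolvent $(i\omega_\kappa(k)-i\omega_\lambda(k))^{-1}$ contributes at most $C_\omega$, so $\Vert\widehat{g}_{\kappa\lambda}^{(1)}\Vert_{X^{s,\varepsilon}}\le C_\omega q$. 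Statement (d) is then Lemma \ref{lemma12} applied to (c), upon shrinking $q$ so that $1/(1-C_\omega q)\le 2$. Finally (e) at $j=1$ is obtained by plugging (b)--(d) into \eqref{resonant} and invoking Lemma \ref{lemma11}.

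\textbf{Inductive step.} Assuming (a)--(e) up to level $j$, I would verify them at level $j+1$ in the order (e), (a), (b), (c), (d). For (e) at $j+1$ I apply Lemma \ref{lemma11} to \eqref{resonant}: the dominant bilinear contribution is $f_{res}^{(j)}\,h^{(j)}$, bounded by $(q/(1-q^{1/2}))\cdot 2C_\omega q^{(j+1)/2}=\mathcal{O}(q^{(j+3)/2})$, while the term $\widetilde{g}^{(j)} f_{non}^{(j)}$ is of order $C_\omega q^{j+1}$ and the trilinear terms are smaller; shrinking $q$ so that the accumulated constant times $q^{1/2}$ is $\le 1$ yields the target $q\cdot q^{j/2}$. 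Statement (a) at $j+1$ then follows by telescoping and summing the geometric progression,
\begin{equation*}
\Vert\widehat{f}_{\kappa,res}^{(j+1)}\Vert_{X^{s,\varepsilon}}\le q+\sum_{k=1}^{j} q\cdot q^{k/2}=\frac{q(1-q^{(j+1)/2})}{1-q^{1/2}}.
\end{equation*}
For (b) at $j+1$ I apply Lemma \ref{lemma11} to \eqref{nonresonant}: the dominant term $\widetilde{g}^{(j)}\,f_{res}^{(j)}$ is of order $C_\omega q^{(j+1)/2}\cdot q/(1-q^{1/2})=\mathcal{O}(q^{(j+3)/2})$, which for $q$ small is bounded by $q^{(j+2)/2}$. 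Statement (c) at $j+1$ is the $g$-formula applied to the just-proved bound on $\widehat{f}_{\kappa,non}^{(j+1)}$, and (d) at $j+1$ is one more application of Lemma \ref{lemma12}.

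\textbf{Where I expect the difficulty.} The delicate point is that $\widehat{f}_{\cdot,res}^{(j)}$ does \emph{not} tend to zero with $j$; it only stabilizes at the finite level $q/(1-q^{1/2})$. Consequently the geometric decay that powers the convergence of the whole infinite procedure must come entirely from $\widehat{f}_{\cdot,non}^{(j)}$ and from the kernels $\widehat{g}^{(j)},\widehat{h}^{(j)}$, which inherit it through (c) and (d). The cross-term $f_{res}^{(j)} h^{(j)}$ is therefore the tightest constraint in both (e) and (b), and the whole argument rests on fixing $q$ small enough -- as a function of $C_\omega$, the number of summation indices, and the constants from Lemmas \ref{lemma11}--\ref{lemma12} -- that multiplication of this cross-term by $q^{1/2}$ is a genuine contraction.
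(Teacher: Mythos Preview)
Your proposal is correct and follows essentially the same route as the paper: induction on $j$, with the base case read off from \eqref{qeq}, \eqref{2}, and Lemma~\ref{lemma12}, and the inductive step driven by applying Lemma~\ref{lemma11} to the recursions \eqref{resonant}--\eqref{nonresonant} (equivalently \eqref{norm-r}--\eqref{norm-n}), then telescoping for (a). The only cosmetic slip is an off-by-one in your labeling: in the inductive step you compute the bound on $\widehat{f}_{\kappa,res}^{(j+1)}-\widehat{f}_{\kappa,res}^{(j)}$ from quantities at level $j$, which is (e) \emph{at level $j$}, not $j+1$; the paper organizes the induction accordingly (assume (a)--(d) at $j$, derive (e) at $j$, then (a)--(d) at $j+1$), but this does not change the substance.
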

 \begin{proof}
 The proof is based on an induction argument. \medskip
 
{\bf i)} For $j=1$ the estimates a) and b) follow from \eqref{qeq} and  the assertion in c) follows from  \eqref{2}. From \eqref{h-norm} we have 
\begin{align*}
\Vert h_{ik}^{(j)}\Vert_{X^{s,\varepsilon}}\le \frac{\Vert\widehat{g}^{(j)}\Vert_{X^{s,\varepsilon}}}{1-\Vert\widehat{g}^{(j)}\Vert_{X^{s,\varepsilon}}}. 
\end{align*}
If $q$ is chosen smaller than $\frac{1}{2C_{\omega}}$,  then due to the induction basis for c) we obtain the assertion in d) for $j=1$.\medskip

 {\bf ii)}  
 Using \eqref{2}, \eqref{h-norm} and \eqref{norm-r}  we obtain
 with the abbreviation $\Vert \widehat{f}_n^{(j)}\Vert_{X^{s,\varepsilon}}:=\max\limits_{\kappa\in\{1,-1,2,-2\}}\Vert \widehat{f}_{\kappa ,non}^{(j)}\Vert_{X^{s,\varepsilon}}$
 that
\begin{eqnarray*}
 \Vert \widehat{f}_{1,res}^{(j+1)}- \widehat{f}_{1,res}^{(j)} \Vert_{X^{s,\varepsilon}}\le&&2C_{\omega}\Vert \widehat{f}_{,non}^{(j)} \Vert_{X^{s,\varepsilon}}^2+4C_{\omega}\Vert \widehat{f}_{1,res}^{(j)} \Vert_{X^{s,\varepsilon}}\Vert\widehat{f}_{,non}^{(j)} \Vert_{X^{s,\varepsilon}} \\
&&+8C_{\omega}^2\Vert\widehat{f}_{,non}^{(j)} \Vert_{X^{s,\varepsilon}}^3+8C_{\omega}^2\Vert\widehat{f}_{,non}^{(j)} \Vert_{X^{s,\varepsilon}}^2\Vert \widehat{f}_{2,res}^{(j)}\Vert_{X^{s,\varepsilon}} \,.
\end{eqnarray*}
Using the induction hypotheses $\Vert\widehat{f}_{i,non}^{(j)}\Vert_{X^{s,\varepsilon}}\le q^{\frac{j+1}{2}}$ and $\Vert\widehat{f}_{i,res}^{(j)}\Vert_{X^{s,\varepsilon}}\le q\frac{1-q^{\frac{j}{2}}}{1-q^{\frac{1}{2}}}$ we find
\begin{eqnarray*}
 \Vert \widehat{f}_{1,res}^{(j+1)} - \widehat{f}_{1,res}^{(j)} \Vert_{X^{s,\varepsilon}} \le 
2C_{\omega}q^{j+1}+\frac{4C_{\omega}}{1-q^{\frac{1}{2}}}q^{\frac{j+3}{2}}+ 
8C_{\omega}^2q^{\frac{3(j+1)}{2}}+ \frac{8C_{\omega}^2}{1-q^{\frac{1}{2}}}q^{j+2}\le qq^{\frac{j}{2}} 
\end{eqnarray*}
which implies e) for $ q > 0 $ sufficiently small.
Since $ \widehat{f}_{1,res}^{(j+1)} = \widehat{f}_{1,res}^{(1)} + \sum_{k=1}^{j} (\widehat{f}_{1,res}^{(k+1)}-\widehat{f}_{1,res}^{(k)}) $ we  estimate
\begin{eqnarray*}
 \Vert \widehat{f}_{1,res}^{(j+1)}\Vert_{X^{s,\varepsilon}}\le q\sum_{k=0}^{j}q^{\frac{k}{2}}=q\frac{1-q^{\frac{j+1}{2}}}{1-q^{\frac{1}{2}}}.
\end{eqnarray*}
Using \eqref{norm-n} and \eqref{h-norm} we estimate  $ \Vert\widehat{f}_{i,non}^{(j)}\Vert_{X^{s,\varepsilon}}$  similarly to the resonant terms by
\begin{eqnarray*}
 \Vert\widehat{f}_{i,non}^{(j+1)}\Vert_{X^{s,\varepsilon}}&\le& 6C_\omega q^{\frac{j+1}{2}}q\frac{1-q^{\frac{j}{2}}}{1-q^{\frac{1}{2}}}
+8C_\omega q^{\frac{3(j+1)}{2}}+8C_\omega q^{j+2}\frac{1-q^{\frac{j}{2}}}{1-q^{\frac{1}{2}}}\le q^{\frac{j+2}{2}}
\end{eqnarray*}
for $ q > 0 $ sufficiently small.
 With the help of \eqref{h-norm} we obtain the estimates  \textbf{c)} and \textbf{d)}. 
\end{proof}
\begin{remark}{\rm 
 Due to  \eqref{eric} we have for 
  $\Vert \widehat{\Psi}\Vert_{L^1_s}$ sufficiently small  that $\Vert\widehat{f}_{i,res}^{(1)}\Vert_{X^{s,\varepsilon}}+\Vert\widehat{f}_{i,non}^{(1)}\Vert_{X^{s,\varepsilon}} \leq q $ 
  for a $ 0 < q \ll 1 $ independent of $ 0 < \varepsilon \ll 1 $.
 }
\end{remark}

\begin{remark}\label{remark15}
{\rm Similar estimates as in  Lemma \ref{theorem13}  hold for the partial derivatives of $\widehat{f}_{i,res}^{(j)}(k,l,\varepsilon t)$, $\widehat{g}_{ir}^{(j)}(k,l,\varepsilon t)$ and $\widehat{h}_{ir}^{(j)}(k,l,\varepsilon t)$ w.r.t. the first  and the  third argument, because the functions $\omega_{\lambda}(k)$ with $\lambda\in\{\pm 1,\pm 2\}$ are continuously differentiable and the  series for $\hat{h}_{\cdot\,\cdot}^{(j)}$ converges uniformly. The proof works in an absolute  analogous way. }
\end{remark}
Finally we prove 
\begin{lemma} \label{lemmashee}
The terms $\widehat{p}_{l,j+1}$ can be bound by 
\begin{eqnarray} \label{p-estimate1}
\|\widehat{p}_{l,j+1}\|_{H^0_s} &\leq& C \big(\, \| \widehat{{R}}_{-2,j+1} \|_{H^0_s} +\ldots +\| \widehat{{R}}_{2,j+1}\|_{H^0_s}  \\ && \qquad +  \varepsilon^{1/2} (\| \widehat{{R}}_{-2,j+1} \|_{H^0_s} +\ldots +\| \widehat{{R}}_{2,j+1} \|_{H^0_s})^2 + 1 \big) \nonumber
\end{eqnarray}
and 
\begin{equation}\label{stuck}
 \Vert \widehat{p}_{l,j+1}  - \widehat{p}_{l,j}\Vert_{H^0_s} \leq C q^{\frac{j}{2}},
\end{equation}
with  $ q \in (0,1) $ from Lemma \ref{theorem13} and with $ C $ a constant  independent of 
$ j $ and $ 0 < \varepsilon \ll 1 $.
\end{lemma}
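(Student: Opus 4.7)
I would prove both estimates simultaneously by induction on $j$, using the recursion formula \eqref{epsilon} for $\widehat{p}_{1,j+1}-\widehat{p}_{1,j}$; the analogous identities for $l\in\{-1,\pm2\}$ follow by the reflection symmetry $\widehat{R}_{-l,j}=\overline{\widehat{R}_{l,j}}$, $\widehat{p}_{-l,j}=\overline{\widehat{p}_{l,j}}$. The base case $j=1$ of \eqref{p-estimate1} is immediate from the bound on $\|\varepsilon\widehat{p}_w\|_{H^0_s}$ displayed just after \eqref{thunder4} together with the identification $\widehat{p}_{l,1}=\widehat{p}_l$, and \eqref{stuck} is vacuous for $j=0$.

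For the inductive step I would estimate each of the three summands on the right-hand side of \eqref{epsilon} in $H^0_s$. The convolution inequality \eqref{garfunkel} combined with part c) of Lemma \ref{theorem13} yields
\[
\Bigl\|\int\widehat{g}^{(j)}_{1\lambda}(k,k-m,\varepsilon t)\widehat{p}_{\lambda,j}(m,t)\,dm\Bigr\|_{H^0_s}\le C\|\widehat{g}^{(j)}\|_{X^{s,\varepsilon}}\|\widehat{p}_{\lambda,j}\|_{H^0_s}\le Cq^{(j+1)/2}\|\widehat{p}_{\lambda,j}\|_{H^0_s},
\]
and the same reasoning (invoking Remark \ref{remark15} for $\partial_T\widehat{g}^{(j)}$) gives the analogous bound for the $\partial_T\widehat{g}^{(j)}$-term in terms of $\|\widehat{R}_{\lambda,j}\|_{H^0_s}$.

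The delicate contribution is $\widehat{r}_{1,j}$. Writing $\omega_l(m)-\omega_l(k)=(m-k)\int_0^1\omega_l'(k+s(m-k))\,ds$ and using the long-wave structure of $\widehat{g}^{(j)}$ recalled after \eqref{x-norm}, namely that the ``slow variable'' part has the form $\varepsilon^{-1}\widehat{\varphi}^{(j)}((k-m)/\varepsilon,\varepsilon t)$, one sees that an extra factor $(m-k)$ multiplying the kernel produces exactly one power of $\varepsilon$ in the $X^{s,\varepsilon}$-norm. This gain cancels precisely the prefactor $\varepsilon^{-1}$ implicit in $\widehat{r}_{1,j}$, so that $\|\widehat{r}_{1,j}\|_{H^0_s}\le Cq^{(j+1)/2}\|\widehat{R}_{\lambda,j}\|_{H^0_s}$. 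To close the induction I would then substitute $\widehat{R}_{\lambda,j}$ and $\widehat{p}_{\lambda,j}$ by expressions in $\widehat{R}_{\kappa,j+1}$ via the inverse transformation \eqref{inverse}, using Lemma \ref{lemma12} and part d) of Lemma \ref{theorem13} to obtain $\|\widehat{R}_{\lambda,j}\|_{H^0_s}\le(1+Cq^{(j+1)/2})\sum_\kappa\|\widehat{R}_{\kappa,j+1}\|_{H^0_s}$. Combining these estimates produces \eqref{stuck}, and the telescoping identity $\widehat{p}_{l,j+1}=\widehat{p}_{l,1}+\sum_{k=1}^j(\widehat{p}_{l,k+1}-\widehat{p}_{l,k})$ together with the convergent geometric series $\sum_k q^{k/2}$ upgrades the base case to \eqref{p-estimate1} uniformly in $j$; the constant, linear, and quadratic (with factor $\varepsilon^{1/2}$) structure of the bound is preserved because each normal form correction contributes only a multiplicative factor $1+\mathcal{O}(q^{(j+1)/2})$.

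The main obstacle is the treatment of $\widehat{r}_{1,j}$: a naive estimate would lose the factor $\varepsilon^{-1}$ and destroy any uniform-in-$j$ bound. The resolution rests on the Lipschitz continuity of $\omega_{\pm 1},\omega_{\pm 2}$ together with the long-wave concentration of $\widehat{\Psi}$ near $k=0$, which is built into the very definition of the $X^{s,\varepsilon}$-norm. Beyond this point the argument reduces to careful but routine bookkeeping based on the geometric decay rates already established in Lemma \ref{theorem13}.
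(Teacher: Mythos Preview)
Your approach is essentially the same as the paper's: both start from the recursion \eqref{epsilon}, bound the two convolution terms via \eqref{garfunkel} together with Lemma~\ref{theorem13}\,c) and Remark~\ref{remark15}, and then feed the resulting inequality $\|\widehat{p}_{,j+1}-\widehat{p}_{,j}\|_{H^0_s}\le Cq^{(j+1)/2}(\|\widehat{R}_{,j}\|_{H^0_s}+\|\widehat{p}_{,j}\|_{H^0_s})$ into a geometric-series/telescoping argument. The only substantive difference is the handling of $\widehat{r}_{1,j}$: you give the explicit mean-value argument exploiting the Lipschitz bound $|\omega_l(m)-\omega_l(k)|\le C|k-m|$ together with the weight structure of $X^{s,\varepsilon}$, whereas the paper simply invokes \cite[Lemma~3.1]{DS06} to obtain $\|\widehat{r}_{1,j}\|_{H^0_s}\le C\sum_{l}\|\widehat{g}_{1l}^{(j)}\|_{X^{s,\varepsilon}}\|\widehat{R}_{l,j}\|_{H^0_s}$ directly. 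You are also more explicit than the paper about passing from $\widehat{R}_{,j}$ to $\widehat{R}_{,j+1}$ via Lemma~\ref{lemma12} and part d); the paper leaves this implicit in the sentence ``can be estimated in terms of $\widehat{p}_{,1}$''. Both routes yield the same bounds.
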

\begin{proof}
Using  \eqref{epsilon} and Lemma \ref{lemma11} we find
\begin{eqnarray*}
 \Vert \widehat{p}_{1,j+1}  - \widehat{p}_{1,j}\Vert_{H^0_s} &\le & \sum_{\lambda\in\{2,-2\}}\Vert \partial_T \widehat{g}_{1,\lambda}^{(j)}\Vert_{X^{s,\varepsilon}} \Vert \widehat{R}_{\lambda,j}\Vert_{H^0_s}\\ && +\sum_{\lambda\in\{2,-2\}}\Vert \widehat{g}_{1,\lambda}^{(j)}\Vert_{X^{s,\varepsilon}}\Vert\widehat{p}_{\lambda,j}\Vert_{H^0_s}
+\Vert\widehat{r}_{1,j}\Vert_{H^0_s}.
\end{eqnarray*}
\cite[Lemma 3.1]{DS06} immediately implies 
$$ \| \widehat{r}_{1,j} \|_{H^0_s}  \leq  C \sum_{l\in\{2,-2\}} \|  \widehat{g}_{1l}^{(j)} \|_{X^{s,\varepsilon}} \| \widehat{R}_{l,j} \|_{H^0_s} ,$$
with $ C $ independent of $j $.
From Lemma \ref{theorem13} c) and Remark \ref{remark15} it follows
\begin{align}
 \Vert \widehat{p}_{,j+1}-\widehat{p}_{,j} \Vert_{H^0_s}\le & Cq^{\frac{j+1}{2}}\Vert\widehat{R}_{,j}\Vert_{H^0_s}+Cq^{\frac{j+1}{2}}\Vert\widehat{p}_{,j} \Vert_{H^0_s}, \nonumber
\end{align}
where
$\widehat{p}_{,j}= ( \widehat{p}_{-2,j},\widehat{p}_{-1,j},\widehat{p}_{1,j}, \widehat{p}_{2,j})$.
This inequality implies that $ (\widehat{p}_{,j})_{j \in \mathbb{N}} $ converges like a geometric series and that the $ \widehat{p}_{,j} $ can be estimated in terms 
of $ \widehat{p}_{,1} $.
\end{proof}

\section{The transformed equations and  the final energy estimates}

\label{sec4}
Due to Lemma \ref{theorem13} and Lemma \ref{lemmashee} we have for $l\in\{\pm 1, \pm 2\}$ that $\|\widehat{f}_{l,non}^{(j)}(\cdot,\varepsilon t)\|_{X^{s,\varepsilon}} \rightarrow 0$ for $j \rightarrow \infty$, the sequences $\big(\widehat{f}_{l,res}^{(j)}(\cdot,\cdot,\varepsilon t)\big)_{j\in\mathbb{N}}$ are Cauchy sequences in $X^{s,\varepsilon}$ and that the sequences $\big(\widehat{p}_{l,j}(\cdot,\varepsilon t)\big)_{j\in\mathbb{N}}$ and $\big(\widehat{R}_{l,j}(\cdot,\varepsilon t)\big)_{j\in\mathbb{N}}$ are Cauchy sequences in $H^{0}_s$. From the completeness  of ${X^{s,\varepsilon}}$ and $H^{0}_s$ it follows that their limits exist in $X^{s,\varepsilon}$ and $H^0_s$, respectively. Hence,
after the infinitely many transformations we can eliminate the non-resonant terms in \eqref{thunder1}-\eqref{thunder4} at leading order in $\varepsilon$. We  arrive at
\begin{eqnarray*}
\partial_t\widehat{\mathcal{R}}_1(k,t)&=&i\omega_1(k)\widehat{\mathcal{R}}_1(k,t)
\\ &&+i\omega_1(k) \int\sum_{\mu\in\{1,-1\}}\widehat{f}_{1,res}(k, k-m, \varepsilon t)\widehat{\mathcal{R}}_\mu(m,t)dm  +\varepsilon \widehat{P}_1(k,t),  \\
\partial_t\widehat{\mathcal{R}}_{-1}(k,t)&=&-i\omega_1(k)\widehat{\mathcal{R}}_{-1}(k,t)\\ &&-i\omega_1(k) \int\sum_{\mu\in\{1,-1\}}\widehat{f}_{1,res}(k, k-m, \varepsilon t)\widehat{\mathcal{R}}_\mu(m,t)dm+\varepsilon \widehat{P}_{-1}(k,t),  \\
\partial_t\widehat{\mathcal{R}}_2(k,t)&=&i\omega_2(k)\widehat{\mathcal{R}}_2(k,t)
\\ &&+i\omega_2^{-1}(k)\int\sum_{\mu\in\{2,-2\}}\widehat{f}_{2,res}(k, k-m, \varepsilon t)\widehat{\mathcal{R}}_\mu(m,t)dm+\varepsilon \widehat{P}_2(k,t), \\
\partial_t\widehat{\mathcal{R}}_{-2}(k,t)&=&-i\omega_2(k)\widehat{\mathcal{R}}_{-2}(k,t)\\ &&-i\omega_2^{-1}(k)\int\sum_{\mu\in\{2,-2\}}\widehat{f}_{2,res}(k, k-m, \varepsilon t)\widehat{\mathcal{R}}_\mu(m,t)dm+\varepsilon \widehat{P}_{-2}(k,t), 
\end{eqnarray*}
where $\widehat{\mathcal{R}}_{l} $, $ \widehat{f}_{l,res}$, $\widehat{P}_l$ are the limits of $\widehat{{R}}_{l,j} $, $\widehat{f}_{l ,res}^{(j)} $, $\widehat{p}_{l,j}$ for $j\rightarrow\infty$. 
The higher order terms can be estimated as 
\begin{eqnarray*} 
\|\widehat{P}_{l}\|_{H^0_s} &\leq& C \big(\, \| \widehat{\mathcal{R}}_{-2} \|_{H^0_s} +\ldots +\| \widehat{\mathcal{R}}_2 \|_{H^0_s}\\
&& \qquad +  \varepsilon^{1/2} (\| \widehat{\mathcal{R}}_{-2} \|_{H^0_s} +\ldots +\| \widehat{\mathcal{R}}_2 \|_{H^0_s})^2 + 1 \big) .
\end{eqnarray*}
Undoing the diagonalization \eqref{diagonali} yields
\begin{align}\label{undiagonal} 
\partial_t\widehat{\mathcal{R}}_u(k,t)=& i \omega_1(k)\widehat{\mathcal{W}}_u(k,t)+\varepsilon \widehat{P}_{u_1}(k,t),\\
\partial_t\widehat{\mathcal{W}}_u(k,t)=& i \omega_1(k)\widehat{\mathcal{R}}_u(k,t)+i \omega_1(k)\int \widehat{f}_u(k,k-m,\varepsilon  t) \widehat{\mathcal{R}}_u(m,t) dm + \varepsilon \widehat{P}_{u_2}(k,t),\nonumber \\
\partial_t\widehat{\mathcal{R}}_v(k,t)=& i \omega_2(k)\widehat{\mathcal{W}}_v(k,t)+\varepsilon \widehat{P}_{v_1}(k,t),\nonumber 
\\
\partial_t\widehat{\mathcal{W}}_v(k,t)=& i \omega_2(k)\widehat{\mathcal{R}}_v(k,t)+i \omega_2^{-1}(k) \int \widehat{f}_v(k,k-m,\varepsilon  t) \widehat{\mathcal{R}}_v(m,t) dm +  \varepsilon \widehat{P}_{v_2}(k,t), \nonumber
\end{align}
with the abbreviation
$$
\widehat{f}_u(k,k-m,\varepsilon t) =2\widehat{f}_{1,res}(k,k-m,\varepsilon t), \qquad
\widehat{f}_v(k,k-m,\varepsilon t) =2\widehat{f}_{2,res}(k,k-m,\varepsilon t)
$$
and 
\begin{eqnarray*} 
\|\widehat{P}_{w}\|_{H^0_s} &\leq& C \Big(\, \| \widehat{\mathcal{R}}_{u} \|_{H^0_s} +\ldots +\| \widehat{\mathcal{W}}_v \|_{H^0_s} \\ && \qquad +  \varepsilon^{1/2} (\| \widehat{\mathcal{R}}_{u} \|_{H^0_s} +\ldots +\| \widehat{\mathcal{W}}_v \|_{H^0_s})^2 + 1 \big) 
\end{eqnarray*}
for $w \in \{u_1,u_2,v_1,v_2\}$ and $C$ only depending on $\omega_1, \omega_2$ and $\widehat{\Psi}$.

The energy estimates for \eqref{bousseq1} are based on integration by parts.
The following lemma allows to transfer this approach to \eqref{undiagonal}. 
\begin{lemma}\label{lemma property}
For $w\in\{u,v\}$
 the functions $\widehat{f}_{w}(k,k-m,\varepsilon t)$  satisfy:
\begin{enumerate}
 \item[(i)]  $\widehat{f}_{w}(k,k-m,\varepsilon t) =\overline{\widehat{f}_{w}(k,m-k,\varepsilon t) }$,
  \item[(ii)] $\sup\limits_{k \in \R} \Vert\widehat{f}_{w}(k,\cdot,\varepsilon t)-\widehat{f}_{w}(k -\cdot,\cdot,\varepsilon t)\Vert_{L^1} \leq C\varepsilon.$
\end{enumerate}
\end{lemma}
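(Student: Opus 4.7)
The plan is to prove both items by induction on the normal-form index $j$ applied to the pre-limit kernels $\widehat{f}_{l,res}^{(j)}$ and $\widehat{f}_{l,non}^{(j)}$, and then pass to the limit $j\to\infty$ via the convergence established in Lemma \ref{theorem13} and Remark \ref{remark15}.

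For item (i), the base case follows from \eqref{eric}: since $\Psi$ is real-valued in physical space, $\widehat{\Psi}(l,\varepsilon t) = \overline{\widehat{\Psi}(-l,\varepsilon t)}$, so the coincidence $\widehat{f}^{(1)}_{l,res}(k,l',\varepsilon t) = \widehat{f}^{(1)}_{l,non}(k,l',\varepsilon t) = \widehat{\Psi}(l',\varepsilon t)$ satisfies the required conjugation in its second argument trivially. For the inductive step one tracks how each ingredient of the recursion preserves the symmetry: (a) the kernels $\widehat{g}^{(j)}_{i\lambda}$ and $\widehat{\widetilde g}^{(j)}_{i\lambda}$ are obtained from $\widehat{f}^{(j)}_{\cdot,non}$ by multiplication with the real-valued prefactors $\omega_i(k)/(\omega_i(k)-\omega_\lambda(k))$ and $-(\omega_i(k)-\omega_\lambda(k))^{-1}$ depending only on $k$, which transfer the conjugation-in-second-argument symmetry; (b) the inverse kernels $\widehat{h}^{(j)}$ are produced by the Neumann series \eqref{neumann} from compositions of $\widehat{g}^{(j)}$, which are themselves convolutions in the momentum-transfer variable, and such convolutions preserve the reality condition in the second argument; (c) the recursion formulas \eqref{resonant} and \eqref{nonresonant} then combine such kernels by further momentum-transfer convolutions and by pointwise multiplication by the real Fourier multiplier $\omega_2^{-1}$, neither of which breaks the symmetry. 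Letting $j\to\infty$ in $X^{s,\varepsilon}$ passes the pointwise identity to $\widehat{f}_w$.

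For item (ii), the base case is immediate since $\widehat{f}^{(1)}(k,l,\varepsilon t) = \widehat{\Psi}(l,\varepsilon t)$ does not depend on $k$, so the difference is zero. For the inductive step the mean-value theorem gives
\[
\bigl|\widehat{f}_w(k,l,\varepsilon t)-\widehat{f}_w(k-l,l,\varepsilon t)\bigr| \le |l|\,\sup_{\xi\in\R}\bigl|(\partial_1 \widehat{f}_w)(\xi,l,\varepsilon t)\bigr|,
\]
where $\partial_1$ denotes the partial derivative in the first argument. By Remark \ref{remark15}, estimates analogous to those of Lemma \ref{theorem13} hold for $\partial_1 \widehat{f}^{(j)}_{w,res}$ uniformly in $j$, so the limit $\partial_1\widehat{f}_w(\cdot,\cdot,\varepsilon t)$ lies in a bounded subset of $X^{s,\varepsilon}$. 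Using the pointwise inequality $|l|\le \varepsilon(1+(l/\varepsilon)^2)^{1/2}$, for $s\ge 1$ we then extract the desired factor of $\varepsilon$:
\[
\sup_{k\in\R}\int\bigl|\widehat{f}_w(k,l,\varepsilon t)-\widehat{f}_w(k-l,l,\varepsilon t)\bigr|\,dl \le \varepsilon\,\bigl\|\partial_1\widehat{f}_w(\cdot,\cdot,\varepsilon t)\bigr\|_{X^{1,\varepsilon}} \le C\varepsilon,
\]
which is precisely the assertion.

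The main obstacle is the uniformity in $j$ of the Lipschitz-in-$k$ regularity of $\widehat{f}^{(j)}_{w,res}$, which is what ultimately powers the $\mathcal{O}(\varepsilon)$ gain in (ii). This is exactly the content of Remark \ref{remark15}: the geometric-series convergence from Lemma \ref{theorem13} survives differentiation in the first argument because $\omega_1, \omega_2$ are $C^1$ with bounded derivatives, and the resonance denominators $\omega_i(k)-\omega_\lambda(k)$ (for $i\in\{\pm 1\}, \lambda\in\{\pm 2\}$ and vice versa) are uniformly bounded away from zero on $\R$ by \eqref{nonres2}.
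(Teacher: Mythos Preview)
Your proof is correct and follows essentially the same route as the paper's: for (i) you argue by induction on $j$ starting from the reality of $\Psi$, propagating the second-argument conjugation symmetry through the real prefactors in \eqref{2} to $\widehat{g}^{(j)}$ and $\widehat{\widetilde{g}}^{(j)}$, then through the Neumann series to $\widehat{h}^{(j)}$, and finally through the recursions \eqref{resonant}--\eqref{nonresonant}; for (ii) you apply the mean-value theorem in the first slot and extract the factor $\varepsilon$ from the weight via $|l|\le \varepsilon\,(1+(l/\varepsilon)^2)^{1/2}$, invoking Remark~\ref{remark15} for the uniform $X^{s,\varepsilon}$-bound on $\partial_1\widehat{f}^{(j)}_{w,res}$ --- exactly the paper's argument. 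The only cosmetic issue is that you label the mean-value step in (ii) an ``inductive step'' although the argument is applied directly to the limit $\widehat{f}_w$ and is not itself inductive; the induction is hidden in Remark~\ref{remark15}.
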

\begin{proof}
 The starting point of the iteration process was 
 $$ \widehat{f}_{\lambda ,res}^{(1)}(k,k-m,\varepsilon t)= \widehat{\Psi}(k-m,\varepsilon t), $$ 
 where  $\widehat{\Psi}(\cdot,\varepsilon t)$ is the Fourier transform of a real-valued function, such that 
$\widehat{\Psi}(k,\cdot)=\overline{\widehat{\Psi}(-k,\cdot)}$. Therefore, we obtain 
\begin{align}
\widehat{f}_{\lambda ,res}^{(j)}(k,k-m,\varepsilon t)=&\overline{\widehat{f}_{\lambda ,res}^{(j)}(k,m-k,\varepsilon t)},\label{eigenschaft} \\
\widehat{f}_{\lambda ,non}^{(j)}(k,k-m,\varepsilon t)= &\overline{\widehat{f}_{\lambda ,non}^{(j)}(k,m-k,\varepsilon t)}  \nonumber 
\end{align}
for $j=1$. Using \eqref{2} we find that $\widehat{g}_{\lambda \mu}^{(1)}(k,k-m,\varepsilon t)=\overline{\widehat{g}_{\lambda \mu}^{(1)}(k,m-k,\varepsilon t)}$
and hence 
\[
 \widehat{h}_{\lambda \mu}^{(1)}(k,k-m,\varepsilon t)=\overline{\widehat{h}_{\lambda \mu}^{(1)}(k,m-k,\varepsilon t)}.
\]
 With the help of \eqref{resonant} and a simple induction, it follows that \eqref{eigenschaft} holds for all $j\in\mathbb{N}$. Hence, the assertion in {\it (i)} is valid. 
 
Property (ii) holds due the concentration of  $ \widehat{\psi}_n $ at the wave number $k=0$. In detail, we have
\begin{eqnarray*}
 &&\sup_{k \in \R} \Vert\widehat{f}_{w}(k,\cdot,\varepsilon t)-\widehat{f}_{w}(k-\cdot,\cdot,\varepsilon t)\Vert_{L^1}\\ &\le&\int\sup\limits_{\xi \in \R}|\partial_1\widehat{f}_w(\xi,l)|(1+(l/\varepsilon)^2)^{s/2}\frac{l}{(1+(l/\varepsilon)^2)^{s/2}}dl\\ &\leq& C \varepsilon \Vert\partial_1\widehat{f}_w\Vert_{X^{s,\varepsilon}} .
\end{eqnarray*}
due to Lemma \ref{theorem13} and Remark \ref{remark15}. This implies (ii). 
\end{proof}

In order to establish Theorem \ref{theorem1}
we will prove an $\mathcal{O}(1)$-bound for an  energy $ \mathcal{E}_s$ which will be constructed in the following. It will be close to $ E_s  = E_{s,u}+ E_{s,v}$  which is defined by
\begin{align}
E_{s,u}(t) = \sum\limits_{j=0}^{s} \int |k|^{2j} \big(|\widehat{\mathcal{R}}_u|^2(k,t) + |\widehat{\mathcal{W}}_u|^2(k,t) \big) dk \label{energy_u},\\
E_{s,v}(t) = \sum\limits_{j=0}^{s-1} \int |k|^{2j}\omega_2^{2}(k) \big(|\widehat{\mathcal{R}}_v|^2(k,t) + \widehat{|\mathcal{W}}_v|^2(k,t) \big) dk \label{energy_v}.
\end{align}

Computing $\displaystyle \frac{d}{dt}E_{s,u}(t)$ and realizing that the autonomous linear terms from \eqref{undiagonal} cancel gives 
\begin{eqnarray*}
\frac{d}{dt}E_{s,u}(t) & = &  \sum\limits_{j=0}^{s} \int \int |k|^{2j} i \omega_1(k)\overline{\widehat{\mathcal{W}}_u}(k,t) \widehat{f}_u(k,k-m,\varepsilon  t) \widehat{\mathcal{R}}_u(m,t) dm dk \\&& 
 - \sum\limits_{j=0}^{s} \int \int |k|^{2j} i \omega_1(k)\widehat{\mathcal{W}}_u(k,t)  \overline{\widehat{f}_u(k,k-m,\varepsilon  t) \widehat{\mathcal{R}}_u(m,t)} dm dk +\varepsilon \widehat{Q}_1(t) 
\\& = & - \sum\limits_{j=0}^{s} \int \int |k|^{2j} \partial_t \overline{\widehat{\mathcal{R}}_u}(k,t) \widehat{f}_u(k,k-m,\varepsilon  t) \widehat{\mathcal{R}}_u(m,t) dm dk \\&& 
 - \sum\limits_{j=0}^{s} \int \int |k|^{2j} \partial_t \widehat{\mathcal{R}}_u(k,t)  \overline{\widehat{f}_u(k,k-m,\varepsilon  t) \widehat{\mathcal{R}}_u(m,t)} dm dk  +\varepsilon \widehat{Q}_2(t) 
 \end{eqnarray*}
 where we used the first line of \eqref{undiagonal}. 
Moreover, we have 
\begin{eqnarray*}
|\widehat{Q}_l| & \leq & C (E_s +  \varepsilon^{1/2} (E_s)^{3/2}+1)
\end{eqnarray*}
for $l=1,2$.
 Interchanging the role of $ k $ and $ m $ in the 
 second term yields 
 \begin{eqnarray*}
\frac{d}{dt}E_{s,u}(t)  & = & - \sum\limits_{j=0}^{s} \int \int |k|^{2j} \partial_t \overline{\widehat{\mathcal{R}}_u}(k,t) \widehat{f}_u(k,k-m,\varepsilon  t) \widehat{\mathcal{R}}_u(m,t) dm dk \\&& 
 - \sum\limits_{j=0}^{s} \int \int |m|^{2j} \partial_t \widehat{\mathcal{R}}_u(m,t)  \overline{\widehat{f}_u(m,m-k,\varepsilon  t) \widehat{\mathcal{R}}_u(k,t)} dm dk+\varepsilon \widehat{Q}_2(t) .
 \end{eqnarray*}
 Using that $ k^j = m^j + \mathcal{O}(k-m) $ and Lemma \ref{lemma property} gives next 
  \begin{eqnarray*}
\frac{d}{dt}E_{s,u}(t)   & = &-  \sum\limits_{j=0}^{s} \int \int |k|^{j} \partial_t \overline{\widehat{\mathcal{R}}_u}(k,t) \widehat{f}_u(k,k-m,\varepsilon  t) |m|^{j} \widehat{\mathcal{R}}_u(m,t) dm dk \\&& 
 - \sum\limits_{j=0}^{s} \int \int |m|^{j} \partial_t \widehat{\mathcal{R}}_u(m,t)\widehat{f}_u(m,k-m,\varepsilon  t) |k|^{j}  \overline{\widehat{\mathcal{R}}_u(k,t)} dm dk +\varepsilon \widehat{Q}_3(t) 
\\Š& = & - \sum\limits_{j=0}^{s} \int \int |k|^{j} \partial_t \overline{\widehat{\mathcal{R}}_u}(k,t) \widehat{f}_u(k,k-m,\varepsilon  t) |m|^{j} \widehat{\mathcal{R}}_u(m,t) dm dk \\&& 
 - \sum\limits_{j=0}^{s} \int \int |m|^{j} \partial_t \widehat{\mathcal{R}}_u(m,t)\widehat{f}_u(k,k-m,\varepsilon  t)  |k|^{j} \overline{\widehat{\mathcal{R}}_u(k,t)} dm dk \\ && + \varepsilon Q_4(t) +\varepsilon \widehat{Q}_3(t) ,
\end{eqnarray*}
with 
$$ \varepsilon Q_4(t)=-\sum\limits_{j=0}^{s} \int\int|m|^{j}\partial_t\widehat{\mathcal{R}}_u(m,t)\big(\widehat{f}_u(k,k-m,\varepsilon  t)-\widehat{f}_u(m,k-m,\varepsilon  t)\big)|k|^{j}\widehat{\mathcal{R}}_u(k,t)dmdk.$$
H\"older's inequality, Young's inequality, Lemma \ref{lemma property} and \eqref{undiagonal} allows us to prove for $ Q_3 $ and $ Q_4 $ the same estimates as 
for $ Q_1 $ and $ Q_2 $.
Therefore we obtain
\begin{eqnarray*}
\frac{d}{dt} E_{s,u}(t)& = & - \sum\limits_{j=0}^{s} \int \int \partial_t( |k|^{j} \overline{\widehat{\mathcal{R}}_u}(k,t) |m|^{j} \widehat{\mathcal{R}}_u(m,t)) \widehat{f}_u(k,k-m,\varepsilon  t) dm dk +\varepsilon \widehat{Q}_5(t)
\\& = & - \sum\limits_{j=0}^{s} \partial_t\left(\int \int |k|^{j} \overline{\widehat{\mathcal{R}}_u}(k,t) |m|^{j} \widehat{\mathcal{R}}_u(m,t) \widehat{f}_u(k,k-m,\varepsilon  t) dm dk\right) +\varepsilon \widehat{Q}_6(t)
\end{eqnarray*}
with
\begin{eqnarray*}
|\widehat{Q}_l| &\leq& C (E_s +  \varepsilon^{1/2} (E_s)^{3/2} + 1)
\end{eqnarray*}
for $l=5,6$.
Doing the same calculations  for
$E_{s,v}$ finally yields 
\begin{eqnarray*}
\partial_t \mathcal{E}_s &\leq& C \varepsilon\, (\mathcal{E}_s +  \varepsilon^{1/2} (\mathcal{E}_s)^{3/2}+1)   ,
\end{eqnarray*}
where
\begin{eqnarray*}
\mathcal{E}_s(t)&=&E_s(t)+ \sum\limits_{j=0}^{s} \int \int |k|^{j} \overline{\widehat{\mathcal{R}}_u}(k,t) |m|^{j}\widehat{\mathcal{R}}_u(m,t) \widehat{f}_u(k,k-m,\varepsilon  t) dm dk\\ &&+ \sum\limits_{j=0}^{s} \int \int |k|^{j} \overline{\widehat{\mathcal{R}}_v}(k,t) |m|^{j}\widehat{\mathcal{R}}_v(m,t) \widehat{f}_v(k,k-m,\varepsilon  t) dm dk.
\end{eqnarray*}
Now, a simple application of Gronwall's inequality yields an $\mathcal{O}(1)$-bound for $\mathcal{E}_s$ for all $t\in[0,T_0/\varepsilon]$ for $\varepsilon>0$ sufficiently small. Since $\|R_u\|_{H^{s}}+\|R_v\|_{H^{s}}+\|W_u\|_{H^{s}}+\|W_v\|_{H^{s}} \leq \sqrt{\mathcal{E}_s}$ for $\Psi$ sufficiently small we obtain
\[
\sup\limits_{t\in[0,T_0/\varepsilon]}\|(u,v)(\cdot,t)-(U,V)(\varepsilon\,\cdot,\varepsilon t)\|_{H^s}\;=\;\varepsilon^{3/2}\sup\limits_{t\in[0,T_0/\varepsilon]}\| R(\cdot,t) \|_{H^s}\;\le\; C\varepsilon^{3/2}.
\]
With the help of  Sobolev's embedding theorem we finally get the  bound stated in Theorem \ref{theorem1}. 
\hfill $\square$


\section{Discussion}
\label{secdiscussion}

The question occurs whether to a given initial condition 
the associated solution $(u,v)$ of the KGB system can be approximated by the Whitham approximation.
\begin{theorem} \label{KAnord}
Consider the KGB system \eqref{eq1}-\eqref{eq2} with the family of  initial conditions 
$$
(u,\partial_t u,v,\partial_t v)(x,0) = (\Phi_1,\varepsilon  \Phi_2,H(\Phi_1),\varepsilon H'(\Phi_1) \Phi_2)
(\varepsilon  x)
$$ 
parametrized by $ \varepsilon   > 0 $. Then for every $ T_0 > 0 $  
there exist $ C > 0 $ and $ \varepsilon_0   > 0 $ such that 
for  $ \Phi_j $ satisfying
$ \| \Phi_1 \|_{H^5} + \| \partial_X^{-1} \Phi_2  \|_{H^5}  \leq C $
and every $ \varepsilon \in (0,\varepsilon_0) $
there are solutions $ U \in C([0,T_0],H^5) $ of the Whitham equation  
\eqref{eq5}  with initial conditions $ U(X,0) = \Phi_1(X) $ and $ \partial_T U(X,0) = \Phi_2(X) $
such that 
\[
\sup\limits_{t\in [0,T_0/ \varepsilon]} \|(u,v)(\cdot,t)-(U,H(U))(\varepsilon \,\cdot,\varepsilon t)\|_{H^{1}} \le C_2\varepsilon^{3/2}.
\]
\end{theorem}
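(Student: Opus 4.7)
\medskip

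\noindent\textbf{Proof plan for Theorem \ref{KAnord}.} The plan is to combine local well-posedness for the Whitham system \eqref{eq5a}--\eqref{eq5b} with the approximation Theorem \ref{theorem1}, taking care that the prescribed initial data for KGB match the improved approximation $(\psi_u,\psi_v)$ from \eqref{ansatz} up to an admissible error of order $\varepsilon^2$.

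First, I would solve the Whitham equation on $[0,T_0]$ with initial data $U(\cdot,0)=\Phi_1$, $\partial_T U(\cdot,0)=\Phi_2$. Working with the first-order system \eqref{eq5a}--\eqref{eq5b} in the variables $(U,W)$ with $W(\cdot,0)=\partial_X^{-1}\Phi_2$ (this is exactly why $\partial_X^{-1}\Phi_2$ appears in the hypothesis), the right-hand side becomes $\partial_X \Phi(U)$ with $\Phi'(0)=1+\mathcal{O}(U)>0$, so the system is a quasilinear strictly hyperbolic system with a natural symmetrizer. Classical Kato-type theory then produces a local solution $U\in C([0,T_*],H^5)$ whose lifespan $T_*$ is bounded below by a decreasing function of $\|\Phi_1\|_{H^5}+\|\partial_X^{-1}\Phi_2\|_{H^5}$. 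Choosing $C=C(T_0)$ small enough guarantees $T_*\ge T_0$ and $\sup_{T\in[0,T_0]}\|U(\cdot,T)\|_{H^5}\le C_1$ with $C_1$ as in Theorem \ref{theorem1} and Lemma \ref{lemma1}.

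Second, I would check that the prescribed KGB initial data are $\mathcal{O}(\varepsilon^2)$-close to the improved approximation evaluated at $t=0$. By construction, $\partial_T V(X,0)=H'(\Phi_1)\Phi_2$, so a direct calculation gives
\[
u(\cdot,0)-\psi_u(\cdot,0)=0,\qquad \partial_t u(\cdot,0)-\partial_t\psi_u(\cdot,0)=0,
\]
\[
v(\cdot,0)-\psi_v(\cdot,0)=-\varepsilon^2 V_2(\varepsilon\,\cdot,0),\qquad \partial_t v(\cdot,0)-\partial_t\psi_v(\cdot,0)=-\varepsilon^3\partial_T V_2(\varepsilon\,\cdot,0).
\]
With $\beta=3/2$ and using the scaling \eqref{scaling}, the scaled error function $R$ therefore has $H^s$-norm of order one at $t=0$, exactly the size handled by the energy machinery of Section \ref{sec4}.

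Third, local existence for the KGB system in $H^{s+2}$ (for any $s\ge 1$) follows by standard contraction-mapping arguments applied to \eqref{eq1}--\eqref{eq2} rewritten as a semilinear first-order system as in \eqref{italia1}--\eqref{italia2}. This produces a solution $(u,v)$ on some short time interval. To propagate it to $[0,T_0/\varepsilon]$ I would run the full argument of Sections \ref{sec2}--\ref{sec4} verbatim on the solution just constructed: Lemma \ref{lemma1} and Lemma \ref{lemma1b} bound the residual, the infinite normal-form transformation of Section \ref{sec3} converges because $\|\Psi\|$ can be made small by shrinking $C$, and the energy estimate in Section \ref{sec4} yields $\mathcal{E}_s(t)\le C$ on $[0,T_0/\varepsilon]$. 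A continuation argument then extends the KGB solution to the entire interval, since the error bound keeps $\|(u,v)\|_{H^{s+2}}$ uniformly controlled. Undoing the scaling gives the claimed $\mathcal{O}(\varepsilon^{3/2})$ bound, and Sobolev embedding from $H^1$ into $L^\infty$ is not even needed here because the conclusion is stated in $H^1$.

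The main obstacle is the Whitham existence step: the system \eqref{eq5a}--\eqref{eq5b} is a genuinely nonlinear $2\times 2$ system of conservation laws, so smooth solutions generically break down in finite time regardless of smallness. The statement avoids this by allowing $C=C(T_0)$ to shrink as $T_0$ grows, which is exactly the standard quasilinear lifespan bound. A secondary, but purely bookkeeping, point is matching time-derivative initial data: one must observe that $V=H(U)$ and $V_2$ from \eqref{eq7} are $\mathcal{O}(U^2)$ and $\mathcal{O}(\partial_X^2 V)$, respectively, so the $\varepsilon^{-\beta}$-rescaled initial error for the $v$-component lies in $H^s$ with norm independent of $\varepsilon$, which is precisely what Gronwall's inequality in Section \ref{sec4} requires.
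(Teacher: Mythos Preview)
Your proposal is correct and follows essentially the same approach as the paper, which states only that the result is ``a direct consequence of the estimates shown in Section~\ref{sec4}'' together with local existence for the Whitham system via \cite{Kato75}. You have simply expanded the two-sentence proof into its natural components---Kato-type local existence for \eqref{eq5a}--\eqref{eq5b}, verification that the prescribed KGB data match the improved approximation up to an $\mathcal{O}(\varepsilon^{3/2})$ error in $H^s$ after rescaling by $\varepsilon^{-\beta}$, and invocation of the full normal-form and energy machinery of Sections~\ref{sec2}--\ref{sec4}---and correctly identified why the hypothesis on $\partial_X^{-1}\Phi_2$ is needed and why $C$ must depend on $T_0$.
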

\begin{remark}{\rm  \label{KAsued}
The previous assumptions on $v$ can be weakened slightly  to 
$$
\| (v,\varepsilon^{-1} \partial_t v)(\cdot,0) - (H(\Phi_1),H'(\Phi_1) \Phi_2)
(\varepsilon  \cdot) \|_{H^5\times H^4}
\leq C \varepsilon^{3/2}.
$$
}
\end{remark} 
The proof of Theorem \ref{KAnord} and of Remark \ref{KAsued} is a direct consequence 
of the  estimates shown in Section \ref{sec4}.
The local existence and uniqueness of solutions for the Whitham equation  
\eqref{eq5} resp. for \eqref{eq5a}-\eqref{eq5b} follows from \cite{Kato75}.
It is not expected that the set of initial conditions for the KGB system 
for which the associated solutions can be described by the 
Whitham approximation is much larger.   


\end{document}